\newtheorem{theorem}{Theorem}
\newtheorem{corollary}[theorem]{Corollary}
\newtheorem{definition}{Definition}
\newtheorem{proposition}{Proposition}
\DeclareMathOperator*{\argmax}{arg\:max}
\DeclareMathOperator*{\argmin}{arg\:min}
\DeclareMathOperator*{\arginf}{arg\:inf}
\newcommand{\prox}{\rm{prox}}
\newcommand{\bw}{{\bm{w}}}
\newcommand{\bx}{{\bm{x}}}
\newcommand{\brh}{{\bm{\varrho}}}
\newcommand{\differential}{{\rm{d}}}
\title{\LARGE\textbf{
Proximal Recursion for Solving the Fokker-Planck Equation}
}
\author{Kenneth F. Caluya, Abhishek Halder
\thanks{Kenneth F. Caluya, and Abhishek Halder are with the Department of Applied Mathematics, University of California, Santa Cruz, CA 95064, USA,
        {\tt\small{\{kcaluya,ahalder\}@ucsc.edu}}%
}}
\begin{document}

\maketitle
\thispagestyle{empty}
\pagestyle{empty}

\begin{abstract}
We develop a new method to solve the Fokker-Planck or Kolmogorov's forward equation that governs the time evolution of the joint probability density function of a continuous-time stochastic nonlinear system. Numerical solution of this equation is fundamental for propagating the effect of initial condition, parametric and forcing uncertainties through a nonlinear dynamical system, and has applications encompassing but not limited to forecasting, risk assessment, nonlinear filtering and stochastic control. Our methodology breaks away from the traditional approach of spatial discretization for solving this second-order partial differential equation (PDE), which in general, suffers from the ``curse-of-dimensionality". Instead, we numerically solve an infinite dimensional proximal recursion in the space of probability density functions, which is theoretically equivalent to solving the Fokker-Planck-Kolmogorov PDE. We show that the dual formulation along with the introduction of an entropic regularization, leads to a smooth convex optimization problem that can be implemented via suitable block co-ordinate iteration and has fast convergence due to certain contraction property that we establish. This approach enables meshless implementation leading to remarkably fast computation.
\end{abstract}


\section{Introduction}
Given a deterministic or stochastic dynamical system in continuous time over some finite dimensional state space, say $\mathbb{R}^{n}$, we consider the problem of propagating the trajectory ensembles or densities subject to stochastic initial conditions -- often referred to as the \emph{belief or uncertainty propagation} problem. Mathematically, this amounts to solving an initial value problem associated with a partial differential equation (PDE) of the form
\begin{eqnarray}
\displaystyle\frac{\partial\rho}{\partial t} = \mathcal{L}\rho, \quad \rho(\bm{x},t=0) = \rho_{0}(\bm{x}) \;\text{given}, \label{generalPDE}	
\end{eqnarray}
describing the transport of the density function $\rho(\bm{x},t)$, which is a function of the state vector $\bm{x}\in\mathbb{R}^{n}$, and time $t\geq 0$. Here, $\mathcal{L}$ is a spatial operator that guarantees $\rho \geq 0$, and $\int_{\mathbb{R}^{n}}\rho(\bm{x},t){\mathrm{d}}\bm{x}=1$ for all $t\geq 0$. Without loss of generality, one can interpret $\rho(\bm{x},t)$ as the joint probability density function (PDF) of the state vector $\bm{x}$ at time $t$. We refer to (\ref{generalPDE}) as \emph{transport PDE}.

The structural form of $\mathcal{L}$ in (\ref{generalPDE}) depends on the underlying trajectory level dynamics. For example, consider the case when the dynamics of $\bm{x}(t)\in\mathbb{R}^{n}$ is governed by an ordinary differential equation (ODE) $\dot{\bm{x}} = \bm{f}(\bm{x},t)$, subject to random initial condition $\bm{x}(t=0) = \bm{x}_{0}$ with known joint PDF $\rho_{0}$ (for notational ease, we write $\bm{x}_{0} \sim \rho_{0}$). Then, $\mathcal{L}\rho \equiv -\nabla\cdot\left(\rho\bm{f}\right)$, where $\nabla$ denotes the gradient with respect to (w.r.t.) the standard Euclidean metric, and the resulting \emph{first order} transport PDE is known as the \emph{Liouville equation}. 

More generally, consider the case when the dynamics of $\bm{x}(t)\in\mathbb{R}^{n}$ is governed by an It\^{o} stochastic differential equation (SDE) $\differential\bx = f\left(\bm{x},t\right)\differential t \: + \: \bm{g}(\bm{x},t)\:\differential\bw$, $\bm{x}(t=0)=\bm{x}_{0} \sim \rho_{0}$ (given), the process noise $\bw(t)\in\mathbb{R}^{m}$ is Wiener and satisfy $\mathbb{E}\left[\differential w_{i} \differential w_{j}\right] = \delta_{ij}\differential t$ for all $i,j=1,\hdots,n$, where $\delta_{ij}=1$ for $i=j$, and zero otherwise. Then, 
\begin{eqnarray}
	\mathcal{L}\rho \equiv -\nabla\cdot\left(\rho\bm{f}\right) + \frac{1}{2}\sum_{i,j=1}^{n}\frac{\partial^{2}}{\partial x_{i}\partial x_{j}}(\rho\bm{g}\bm{g}^{\top})_{ij}, 
	\label{FPKoperator}
\end{eqnarray} 
and the resulting  transport PDE (\ref{FPKoperator}) is known as the \emph{Fokker-Planck} or \emph{Kolmogorov's forward equation}. Hereafter, we will refer it as the FPK PDE.

The problem of uncertainty propagation, that is, the problem of computing $\rho(\bm{x},t)$  that satisfies a PDE of the form (\ref{FPKoperator}), is ubiquitous across science and engineering. Representative applications include meteorological forecasting \cite{ehrendorfer1994liouville}, dispersion analysis in spacecraft entry-descent-landing \cite{halder2011dispersion}, orientation density evolution for liquid crystals in chemical physics \cite{hess1976fokker,muschik1997mesoscopic,kalmykov1998analytical}, motion planning in robotics \cite{park2005diffusion,park2008kinematic,hamann2008framework}, computing the prior PDF in nonlinear filtering\cite{challa2000nonlinear,daum2005nonlinear}, probabilistic model validation \cite{halder2011model,halder2012further, halder2014probabilistic}, and analyzing the statistical mechanics of macromolecules \cite{flory1969statistical}. In all these applications, it is of importance to compute the joint PDF $\rho(\bm{x},t)$ in a scalable and unified manner, rather than employing specialized techniques in a case-by-case basis or developing discretization-based PDE solvers which suffer from the ``curse of dimensionality" \cite{bellman1957dynamic}. The Liouville PDE being first order, can be solved efficiently using the method-of-characteristics \cite{halder2011dispersion}. However, solving the second order FPK PDE in a manner that avoids both spatial discretization and function approximation, remains challenging to date.

In this paper, we pursue the solution of (\ref{generalPDE}) through a variational viewpoint arising from the theory of optimal mass transport \cite{villani2003topics}. This viewpoint, first proposed in \cite{jordan1998variational}, interprets (\ref{generalPDE}) as a gradient or steepest descent of certain functional $\Phi(\cdot)$ on the infinite dimensional manifold of PDFs with finite second (raw) moments, denoted as\footnote{We denote the expectation operator w.r.t. the measure $\rho(\bx)\differential\bx$ as $\mathbb{E}_{\rho}\left[\cdot\right]$.} 
\[\mathscr{D}_{2} := \{\rho:\mathbb{R}^{n}\mapsto\mathbb{R} \mid \rho \geq 0, \int_{\mathbb{R}^{n}}\rho=1, \:\mathbb{E}_{\rho}[\bm{x}^{\top}\bm{x}] < \infty\}.\] 
Specifically, let $k=0,1,2,\hdots$, and for some fixed time-step $h>0$, consider a variational recursion
\begin{eqnarray}
\varrho_{k}(\bm{x}) = \underset{\varrho\in\mathscr{D}_{2}}{{\rm{arg\:inf}}}\:\frac{1}{2}\:d^{2}\left(\varrho,\varrho_{k-1}\right) \, + \,h\: \Phi(\varrho),\label{PDFGradDescent}	
\end{eqnarray}
subject to the initial condition $\varrho_{0}(\bm{x}) := \rho_{0}(\bm{x})$, i.e., the initial PDF of (\ref{generalPDE}). Here, $d(\cdot,\cdot)$ is a distance metric on the manifold $\mathscr{D}_{2}$. Then, the idea is to design the metric $d(\cdot,\cdot)$ and the functional $\Phi(\cdot)$ in (\ref{PDFGradDescent}) such that $\varrho_{k}(\bm{x}) \rightarrow \rho(\bm{x},t=kh)$ as $h\downarrow 0$, i.e., in the small time-step limit, the solution of the variational recursion (\ref{PDFGradDescent}) converges (in strong $L^{1}$ sense) to that of (\ref{generalPDE}). The main result in \cite{jordan1998variational} was to show that for FPK operators of the form (\ref{FPKoperator}) with $\bm{f}$ being a gradient vector field and $\bm{g}$ being a scalar multiple of identity matrix, the distance $d(\cdot,\cdot)$ can be taken as the Wasserstein-2 metric with $\Phi(\cdot)$ as the free energy functional. We will make these ideas precise in Section II and III. The resulting variational recursion (\ref{PDFGradDescent}) has since been known as the \emph{Jordan-Kinderlehrer-Otto (JKO) scheme} \cite{ambrosio2008gradient}, and we will refer the FPK operator with such assumptions on $\bm{f}$ and $\bm{g}$ to be in ``JKO canonical form". Similar gradient descent schemes have been derived for many other PDEs; see e.g., \cite{santambrogio2017euclidean} for a recent survey. 

To motivate gradient descent in infinite dimensional spaces, we appeal to a more familiar setting, i.e., gradient descent in $\mathbb{R}^n$ associated with the flow 
\begin{eqnarray}
\dfrac{\differential \bx}{\differential t } = -\nabla \varphi \left(\bx\right)\: \quad \bx(0) = \bx_{0}, 
\label{EGRflow}
\end{eqnarray}
where $\bx ,\bx_0\in \mathbb{R}^{n}$ and $\varphi : \mathbb{R}^{n} \rightarrow \mathbb{R}_{\geq 0}$, and is continuously differentiable. The Euler discretization for (\ref{EGRflow}) is given by
\begin{eqnarray}
\bx_{k} - \bx_{k-1} =-h \nabla \varphi(\bx_{k-1}), 
\end{eqnarray}
which can be rewritten as a variational recursion
\begin{eqnarray}
\bx_{k} = \underset{\bx} {{\rm{arg\:min}}}\: \frac{1}{2} \parallel \bx - \bx_{k-1} \parallel ^2 + h\: \varphi(\bx) + o(h).
\label{ExplicitEuler}
\end{eqnarray}
In the optimization literature, the mapping $\bx_{k-1} \mapsto \bx_{k}$, given by
\begin{eqnarray}
{\prox}^{\parallel \cdot \parallel}_{h\varphi}(\bx_{k-1}) := \underset{\bx} {{\rm{arg\:min}}}\: \frac{1}{2} \parallel \bx - \bx_{k-1} \parallel ^2 + h\: \varphi(\bx),
\label{FiniteDimProxOpDef}
\end{eqnarray}
is called the ``proximal operator" \cite[p. 142]{parikh2014proximal}. The sequence $\{\bx_{k}\}$ generated by the proximal recursion 
\begin{eqnarray}
\bx_{k} =   {\prox}^{\parallel \cdot \parallel}_{h\varphi}(\bx_{k-1}), \quad k=0,1,2,\hdots
\label{FiniteDimProxRecursion}
\end{eqnarray}
converges to the flow of the ODE (\ref{EGRflow}), i.e., the sequence satisfies $\bx_{k} \rightarrow \bx(t=kh)$ as the step-size $h\downarrow 0$. Using the finite dimensional viewpoint (\ref{FiniteDimProxOpDef}), we define
\begin{eqnarray}
{\prox}^{d^{2}}_{h\Phi}(\varrho_{k-1}) := \underset{\varrho\in\mathscr{D}_{2}}{{\rm{arg\:inf}}}\:\frac{1}{2}\:d^{2}\left(\varrho,\varrho_{k-1}\right) \, + \,h\: \Phi(\varrho),
\label{InfiniteDimProxOpDef}
\end{eqnarray}
as an infinite dimensional proximal operator. As mentioned above, the sequence $\{\varrho_{k}\}$ generated by the proximal recursion (\ref{PDFGradDescent}) converges to the flow of the PDE (\ref{EGRflow}), i.e., the sequence satisfies $\varrho_{k}(\bm{x}) \rightarrow \rho(\bm{x},t=kh)$ as the step-size $h\downarrow 0$. We also note that in the finite dimensional case,
\begin{eqnarray}
\frac{\differential}{\differential t} \varphi = \langle \nabla \varphi,  -\nabla \varphi \rangle = -\parallel \nabla \varphi \parallel ^2  <  0
\label{dpsidt}
\end{eqnarray}
which implies $\varphi$ decays along the flow of (\ref{EGRflow}). As we will see next, the appeal of using (\ref{PDFGradDescent}) to solve the FPK PDE comes from the fact that the Euclidean gradient descent can be generalized to the manifold $\mathscr{D}_2$ by appropriately choosing the metric $d(\cdot,\cdot)$ and the functional $\Phi(\cdot)$ in (\ref{PDFGradDescent}), in parallel with the quantities $\parallel \cdot \parallel$ and $\varphi(\cdot)$  in (\ref{FiniteDimProxRecursion}), respectively.

\begin{figure}[t]
\centering
\vspace*{0.05in}
\includegraphics[width=.89\linewidth]{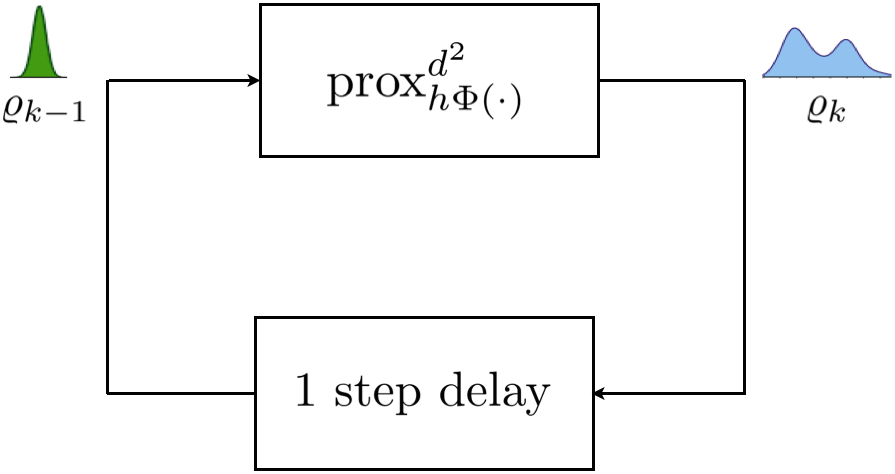}
\caption{\small{The JKO scheme can be described by successive evaluation of proximal operators to recursively update PDFs from time $t=(k-1)h$ to $t= kh$ for $k=1,2,\hdots$, and time-step $h>0$.}}
\vspace*{-0.25in}
\label{fig:ProxSchematic}
\end{figure}

In this paper, we will develop an algorithm to solve the FPK PDE via proximal recursion of the form (\ref{PDFGradDescent}) without making any spatial discretization. A schematic is shown in Fig. \ref{fig:ProxSchematic}. The resulting recursion is proved to be contractive and enjoys fast numerical implementation. Numerical simulation results show the efficacy of the proposed formulation. 


\section{Preliminaries}
In the following, we provide the definitions of the Kullback-Leibler divergence, and the 2-Waserstein metric, which will be useful in the sequel. We also point out some notations used throughout this paper.
\begin{definition} The Kullback-Leibler divergence between two probability measures $\differential \pi_{i}(\bm{x}) =\rho_{i}(\bm{x})\differential\bm{x}$, $i=\{1,2\}$, is given by
\begin{eqnarray}
{\rm{D}}_{{\rm{KL}}}\left(\differential\pi_{1}\parallel\differential\pi_{2}\right) := \displaystyle\int\rho_{1}(\bm{x})\log\displaystyle\frac{\rho_{1}(\bm{x})}{\rho_{2}(\bm{x})}\:\differential\bm{x}, \label{KLdivdefn}	
\end{eqnarray}
which is non-negative, and vanishes if and only if $\rho_{1}=\rho_{2}$. However, (\ref{KLdivdefn}) is not a metric since it is neither symmetric, nor does it satisfy the triangle inequality.
\end{definition}

\begin{definition} The 2-Wasserstein metric between two probability measures $\differential \pi_{1}(\bm{x}) =\rho_{1}(\bm{x})\differential\bm{x}$ and $\differential\pi_{2}(\bm{y}) =\rho_{2}(\bm{y})\differential\bm{y}$ supported respectively on $\mathcal{X},\mathcal{Y}\subseteq \mathbb{R}^{n}$, is denoted as $W(\pi_{1},\pi_{2})$ (equivalently, $W\left(\rho_{1},\rho_{2}\right)$ whenever $\pi_{1},\pi_{2}$ are absolutely continuous so that the PDFs $\rho_{1},\rho_{2}$ exist), and arises in the theory of optimal mass transport \cite{villani2003topics}; it is defined as
\begin{align}
\notag &W(\pi_{1},\pi_{2}):= \\
                               & \left(\underset{\differential\pi\in\Pi\left(\pi_{1},\pi_{2}\right)}{\inf}\displaystyle\int_{\mathcal{X}\times\mathcal{Y}}\parallel \bm{x} - \bm{y} \parallel_{2}^{2}\:\differential\pi\left(\bm{x},\bm{y}\right) \right)^{\frac{1}{2}},
\label{Wdefn}	
\end{align}
where $\Pi\left(\pi_{1},\pi_{2}\right)$ denotes the collection of all probability measures on the product space $\mathcal{X}\times\mathcal{Y}$ having finite second moments, with marginals $\pi_{1}$ and $\pi_{2}$, respectively. Its square, $W^{2}(\pi_{1},\pi_{2})$ equals \cite{benamou2000computational} the minimum amount of work required to transport $\pi_{1}$ to $\pi_{2}$ (or equivalently, $\rho_{1}$ to $\rho_{2}$). It is well-known \cite[Ch. 7]{villani2003topics} that $W(\pi_{1},\pi_{2})$ defines a metric on the manifold $\mathscr{D}_{2}$.
\end{definition}

\subsubsection*{Notations} 
Throughout the paper, we will use bold-faced capital letters for matrices and bold-faced lower-case letters for column vectors. We use the symbol $\langle \cdot, \cdot\rangle$ to denote the Euclidean inner product. In particular, $\langle \bm{A},\bm{B} \rangle:= {\mathrm{trace}}({\bm{A}}^{\top}\bm{B})$ denotes
Frobenius inner product between matrices $\bm{A}$ and $\bm{B}$, and $\langle \bm{a},\bm{b}\rangle := \bm{a}^{\top}\bm{b}$ denotes the inner product between column vectors $\bm{a}$ and $\bm{b}$. We use $\mathcal{N}(\mu,\sigma^2)$ to denote a univariate Gaussian PDF with mean $\mu$ and variance $\sigma^2$. Likewise, $\mathcal{N}(\bm{\mu},\bm{\Sigma})$ denotes a multivariate Gaussian PDF with mean vector $\bm{\mu}$ and covariance matrix $\bm{\Sigma}$. The operands $\log(\cdot)$, $\exp(\cdot)$ and $\geq 0$ are to be understood as element-wise. The notations $\odot$ and $\oslash$ denote element-wise (Hadamard) product and division, respectively. We use $\bm{I}_{n}$ to denote the $n\times n$ identity matrix. The symbols $\bm{1}$ and $\bm{0}$ stand for column vectors of appropriate dimension containing all ones, and all zeroes, respectively.
 


\section{JKO Canonical Form}


In this paper, we consider the It\^{o} SDE
\begin{eqnarray}
\differential\bx = -\nabla \psi\left(\bx\right)\differential t \: + \: \sqrt{2\beta^{-1}}\:\differential\bw	, \quad \bx(0) = \bx_{0},
\label{ItoGradient}
\end{eqnarray}
where the time $t \in [0,\infty)$, the state vector $\bx\in\mathbb{R}^{n}$, the drift potential $\psi : \mathbb{R}^{n} \mapsto (0,\infty)$, the diffusion coefficient $\beta > 0$, and the initial condition $\bx_{0} \sim \rho_{0}(\bx)$. For the sample path $\bx(t)$ dynamics given by the SDE (\ref{ItoGradient}), the flow of the joint PDF $\rho\left(\bm{x},t\right)$ is governed by the FPK PDE
\begin{eqnarray}
\displaystyle\frac{\partial\rho}{\partial t} = \nabla\cdot\left(\rho\nabla \psi\right) \: + \: \beta^{-1}\Delta\rho, \quad \rho(\bx,0) = \rho_{0}(\bx), \label{FPKgradient}	
\end{eqnarray}
and its solution satisfies $\rho \geq 0$, $\int_{\mathbb{R}^{n}}\rho\:\differential\bx = 1$ for all $t\in[0,\infty)$. It is easy to verify that the unique stationary solution of (\ref{FPKgradient}) is the Gibbs PDF $\rho_{\infty}(\bm{x}) = \kappa \exp\left(-\beta \psi(\bm{x})\right)$, where the normalizing constant $\kappa:=\int_{\mathbb{R}^{n}}\exp(-\beta \psi(\bm{x}))$ is referred to as the \emph{partition function}.

A Lyapunov functional associated with the FPK PDE (\ref{FPKgradient}) is the \emph{free energy}
\begin{align}
F(\rho) :=& \: \mathbb{E}_{\rho}\left[\psi + \beta^{-1}\log\rho \right] \label{FPKFreeEnergy1}\\
=& \: \beta^{-1} D_{{\rm{KL}}}\left(\rho \parallel \exp\left(-\beta \psi(\bm{x})\right)\right)	\geq 0,
\label{FPKFreeEnergy2}
\end{align}
that decays \cite{jordan1998variational} along the solution trajectory of (\ref{FPKgradient}), i.e., $\frac{\differential}{\differential t}F < 0$. This follows from re-writing (\ref{FPKgradient}) as
\begin{eqnarray}
\displaystyle\frac{\partial{\rho}}{\partial t} = \nabla\cdot\left(\rho\nabla\zeta\right), \quad \text{where}\quad \zeta := \beta^{-1}\left(1 + \log\rho\right) + \psi,
\label{FPKadvectionform}	
\end{eqnarray}
and consequently
\begin{eqnarray}
\frac{\differential}{\differential t}F = -\mathbb{E}_{\rho}\left[\parallel \nabla\zeta \parallel^{2}\right] < 0,
\label{dFdt}	
\end{eqnarray}
with equality achieved at the stationary solution $\rho_{\infty} = \kappa e^{-\beta \psi(\bm{x})}$. In our context, (\ref{dFdt}) serves as the infinite-dimensional analog of (\ref{dpsidt}). The term \emph{free energy} is motivated by noting that (\ref{FPKFreeEnergy1}) can be seen as the sum of the \emph{potential energy} $\int_{\mathbb{R}^{n}}\psi(\bx)\rho\:\differential\bx$ and the \emph{internal energy} $\beta^{-1}\int_{\mathbb{R}^{n}}\rho\log\rho\:\differential\bx$. When $\psi=0$, the PDE (\ref{FPKgradient}) reduces to the heat equation, which by (\ref{FPKFreeEnergy1}), can then be interpreted as an entropy maximizing flow. 

The seminal paper \cite{jordan1998variational} establishes that the FPK PDE (\ref{FPKgradient}) can be seen as the gradient descent flow of the \emph{free energy} functional $F(\cdot)$ w.r.t. the 2-Wasserstein Metric. Specifically, the solution of (\ref{FPKgradient})
can be recovered from the following proximal recursion of the form (\ref{PDFGradDescent}):
\begin{subequations}
\begin{align}
\varrho_{k}      &= {\prox}^{W^2}_{hF(\cdot)} (\varrho_{k-1}) \label{JKOscheme:a} \\
 &=  \underset{\varrho \in \mathscr{D}_2 }\arginf \ \frac{1}{2} W^2(\varrho_{k-1},\varrho) + h\: F(\varrho), \; k= 1,2,\hdots \label{JKOscheme:b}
 \end{align}
 \label{JKOscheme}
\end{subequations} 
with $\varrho_{0} \equiv \rho_{0}(\bx)$ (from (\ref{FPKgradient})) as $h\downarrow 0$. Next, we develop a framework to numerically solve (\ref{JKOscheme}).


\section{Main Results}
To solve (\ref{JKOscheme}), we discretize time as $t=0,h,2h, \hdots$, and develop an algorithm to solve (\ref{JKOscheme}) without making any spatial discretization. In other words, we would like to perform the recursion (\ref{JKOscheme}) on weighted scattered point cloud $\{\bx_{k}^{i},\varrho_{k}^{i}\}_{i=1}^{N}$ of cardinality $N$ at $t_{k} = kh$, $k\in\mathbb{N}$, where the location of the point $\bx_{k}^{i}\in\mathbb{R}^{n}$ denotes the state-space coordinate, and the corresponding weight $\varrho_{k}^{i} \in \mathbb{R}_{\geq 0}$ denotes the value of the joint  PDF evaluated at that point at time $t_{k}$. Such weighted scattered point cloud representation of (\ref{JKOscheme}) results in the following problem: 
\begin{align}
 \brh_k  =\underset{\brh}\argmin \bigg\{  \underset{\bm{M }\in \Pi(\brh_{k-1},\brh)} \min \frac{1}{2}\langle \bm{C}_{k},\bm{M}\rangle + h \: \langle \bm{\psi}_{k-1}\notag\\
 +\beta^{-1}\log\brh,\brh \rangle   \bigg\},
 \label{FiniteSampleJKO}
\end{align}
to be solved for $k=1,2,\hdots$, where the drift potential vector $\bm{\psi}_{k-1} \in \mathbb{R}^{N}$ is given by
\[\bm{\psi}_{k-1}(i) := \psi\left(\bm{x}_{k-1}^{i}\right), \quad i=1,2,\hdots,N.\]
Similarly, the probability vectors $\brh, \brh_{k-1} \in \mathbb{R}^{N}$. Furthermore, for each $k=1,2,\hdots$, the matrix $\bm{C}_{k}\in\mathbb{R}^{N\times N}$ is given by
\[\bm{C}_{k}(i,j) := \parallel \bm{x}_{k}^{i} -  \bm{x}_{k-1}^{j} \parallel_{2}^{2}, \quad i,j=1,2,\hdots,N,\] 
and $\Pi(\brh_{k-1},\brh)$ stands for the set of all matrices $\bm{M}\in\mathbb{R}^{N\times N}$ such that 
\begin{align}
\bm{M}\geq 0, \quad \bm{M}\bm{1} = \brh_{k-1}, \quad \bm{M}^{\top}\bm{1} = \brh.
\label{CouplingMatrixConstr}
\end{align}
Due to the nested minimization structure in (\ref{FiniteSampleJKO}), its numerical solution is far from obvious. Notice that the inner minimization in  (\ref{FiniteSampleJKO}) is a standard linear programming problem if it were to be solved for a given $\brh$, as in the Monge-Kantorovich optimal mass transport \cite{villani2003topics}. However, the outer minimization in  (\ref{FiniteSampleJKO}) precludes a direct numerical approach.

To circumvent the aforesaid issues, following \cite{karlsson2017generalized}, we first regularize and then dualize (\ref{FiniteSampleJKO}). Specifically, adding an entropic regularization $H(\bm{M}) := \langle\bm{M}, \log\bm{M}\rangle$ in (\ref{FiniteSampleJKO}) yields
\begin{align}
\brh_k = \underset{\brh}\argmin \bigg\{  \underset{\bm{M} \in \Pi(\brh_{k-1},\brh)} \min \frac{1}{2}\langle \bm{C}_{k},\bm{M}\rangle +
\epsilon H(\bm{M}) \notag\\
+ h \: \langle \bm{\psi}_{k-1}+\beta^{-1}\log\brh,\brh \rangle   \bigg\},
\label{EntropyRegJKO}
\end{align}
where $\epsilon>0$ is a regularization parameter. The entropic regularization is standard in optimal mass transport literature \cite{cuturi2013sinkhorn,benamou2015iterative} and leads to efficient Sinkhorn iteration for the inner minimization. In our context, the entropic regularization ``algebrizes" the inner minimization in the sense if $\bm{\lambda}_{0},\bm{\lambda}_{1}$ are Lagrange multipliers associated with the equality constraints in (\ref{CouplingMatrixConstr}), then the optimal coupling matrix $\bm{M}^{\rm{opt}} := [m^{\rm{opt}}(i,j)]$ in (\ref{EntropyRegJKO}) has the Sinkhorn form
\begin{align}
m^{\rm{opt}}(i,j) = \exp\left(\bm{\lambda}_{0}(i)h/\epsilon\right) \exp\left(-\bm{C}_{k}(i,j)/(2\epsilon)\right) \notag\\
\exp\left(\bm{\lambda}_{1}(j)h/\epsilon\right).
\label{SinkhornFormInnerArgmin}	
\end{align}
Since the objective in (\ref{EntropyRegJKO}) is proper convex and lower semi-continuous in $\brh$, the \emph{strong duality} holds, and we consider the Lagrange dual of (\ref{EntropyRegJKO}) given by:
\begin{align}
\bm{\lambda}_0^{\rm{opt}},\bm{\lambda}_1^{\rm{opt}} 
=\underset{\bm{\lambda}_0,\bm{\lambda}_1\geq 0}\argmax \bigg \{  \langle \bm{\lambda}_0,\brh_{k-1}  \rangle 
- F^{\star}(-\bm{\lambda}_1) \notag\\
-\frac{\epsilon }{h} \bigg( \exp({\bm{\lambda}}_{0}^{\top} h / \epsilon ) \exp(-\bm{C}_{k}/2 \epsilon) \exp({\bm{\lambda}}_1h / \epsilon )\bigg)\bigg \}, 
\label{DualJKO}
\end{align} 
where 
\begin{eqnarray}
	F^{\star}(\bm{y}) :=\underset{\bx\in\mathbb{R}^{n}} \sup \: \{\langle\bm{y},\bx\rangle - F(\bx)  \}
\label{LFtransform}	
\end{eqnarray}
is the \emph{Legendre-Fenchel transform} of the free energy $F(\cdot)$ given by (\ref{FPKFreeEnergy1}). 
Next, we derive the first order optimality conditions for (\ref{DualJKO}), and then provide an algorithm to solve the same. 

\subsection{Conditions for Optimality}
Given the vectors $\brh_{k-1}, \bm{\psi}_{k-1}$, the matrix $\bm{C}_{k}$, and the positive scalars $\beta, h, \epsilon$ in (\ref{DualJKO}), let
\begin{alignat}{3}
&\bm{y}:=\exp({\bm{\lambda}}_0h / \epsilon ), \quad &&\bm{z}:=\exp({\bm{\lambda}}_{1}h / \epsilon ), \label{yzmaps}\\
&\bm{\Gamma}_{k} := {\exp(-\bm{C}_{k}/2\epsilon}), \quad &&\bm{\xi}_{k-1} := \exp(-\beta \bm{\psi}_{k-1} - \bm{1}). \label{Gammaxidef}
\end{alignat}
The following result provides a way of computing $\bm{\lambda}_0^{\rm{opt}},\bm{\lambda}_1^{\rm{opt}}$ in (\ref{DualJKO}), and consequently $\brh_{k}$ in (\ref{EntropyRegJKO}).
\begin{theorem}\label{FixedPtRecursionThm}
The vectors $\bm{\lambda}_0^{\rm{opt}},\bm{\lambda}_1^{\rm{opt}}$ in (\ref{DualJKO}) can be found by solving for $\bm{y}$ and $\bm{z}$ from the following system of equations:
\begin{subequations}
\begin{align}
\bm{y}\odot \left(\bm{\Gamma}_{k}\bm{z}\right) &=\bm{\rho}_{k-1}, \label{FixedPtRecursion:a}\\
\bm{z}\odot \left({\bm{\Gamma}_{k}}^{\top} \bm{y}\right) &=\bm{\xi}_{k-1}\odot \bm{z}^{-\frac{\beta\epsilon}{h}}, \label{FixedPtRecursion:b}
\end{align}
\label{FixedPtRecursion}
\end{subequations}
and then inverting the maps (\ref{yzmaps}). The vector $\brh_{k}$ in (\ref{EntropyRegJKO}), i.e., the proximal update (Fig. \ref{fig:ProxSchematic}) can then be obtained as
\begin{align}
\brh_{k} = \bm{z}^{\rm{opt}}\odot \left({\bm{\Gamma}_{k}}^{\top} \bm{y}^{\rm{opt}}\right),
\label{ProxUpdate}
\end{align}
where $\left(\bm{y}^{\rm{opt}},\bm{z}^{\rm{opt}}\right)$ denotes the solution of (\ref{FixedPtRecursion}).
\end{theorem}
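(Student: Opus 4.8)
The strategy is to characterize the maximizer of the concave dual (\ref{DualJKO}) by first-order stationarity, rewrite those conditions under the change of variables (\ref{yzmaps})--(\ref{Gammaxidef}), and then read off the update $\brh_k$ from the optimal coupling. Denote the dual objective in (\ref{DualJKO}) by
\[
J(\bm{\lambda}_0,\bm{\lambda}_1) := \langle\bm{\lambda}_0,\brh_{k-1}\rangle - F^{\star}(-\bm{\lambda}_1) - \frac{\epsilon}{h}\exp(\bm{\lambda}_0 h/\epsilon)^{\top}\bm{\Gamma}_k\exp(\bm{\lambda}_1 h/\epsilon).
\]
I would first note that $J$ is concave and $C^{1}$: the first term is linear, $-F^{\star}(-\,\cdot\,)$ is concave since Legendre--Fenchel conjugates are convex, and the last term is a nonnegatively-weighted sum of negated exponentials of affine maps. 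Differentiability (in fact smoothness and strict convexity) of $F^{\star}$ holds because, in the point-cloud representation, $F(\brh)=\langle\bm{\psi}_{k-1}+\beta^{-1}\log\brh,\brh\rangle$ is separable, strictly convex, and essentially smooth on the nonnegative orthant, so its conjugate is finite and differentiable on $\mathbb{R}^{N}$; the sign restrictions on the multipliers are inactive at the optimum and may be dropped. Consequently $(\bm{\lambda}_0^{\rm opt},\bm{\lambda}_1^{\rm opt})$ is exactly a zero of $\nabla J$.

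I would then compute the two gradient blocks. With $\bm{y}=\exp(\bm{\lambda}_0 h/\epsilon)$ and $\bm{z}=\exp(\bm{\lambda}_1 h/\epsilon)$ as in (\ref{yzmaps}) (the chain-rule factor $h/\epsilon$ cancelling the $\epsilon/h$ prefactor), this gives
\[
\nabla_{\bm{\lambda}_0}J = \brh_{k-1} - \bm{y}\odot(\bm{\Gamma}_k\bm{z}), \qquad \nabla_{\bm{\lambda}_1}J = \nabla F^{\star}(-\bm{\lambda}_1) - \bm{z}\odot(\bm{\Gamma}_k^{\top}\bm{y}).
\]
Setting the first block to zero is exactly (\ref{FixedPtRecursion:a}). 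For the second, I would evaluate $\nabla F^{\star}$ by the envelope (Danskin) theorem: solving $\nabla F(\bx)=\bm{y}$ componentwise, i.e. $\psi_i+\beta^{-1}(\log x_i+1)=y_i$, yields the unique maximizer in (\ref{LFtransform}) as $\bx^{\star}=\bm{\xi}_{k-1}\odot\exp(\beta\bm{y})$ with $\bm{\xi}_{k-1}=\exp(-\beta\bm{\psi}_{k-1}-\bm{1})$ as in (\ref{Gammaxidef}), whence $\nabla F^{\star}(\bm{y})=\bm{\xi}_{k-1}\odot\exp(\beta\bm{y})$. Substituting $\bm{y}\mapsto-\bm{\lambda}_1$ and using $\bm{\lambda}_1=(\epsilon/h)\log\bm{z}$ converts $\exp(-\beta\bm{\lambda}_1)$ into the elementwise power $\bm{z}^{-\beta\epsilon/h}$, so the vanishing of $\nabla_{\bm{\lambda}_1}J$ reads $\bm{z}\odot(\bm{\Gamma}_k^{\top}\bm{y})=\bm{\xi}_{k-1}\odot\bm{z}^{-\beta\epsilon/h}$, which is (\ref{FixedPtRecursion:b}). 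Inverting the maps (\ref{yzmaps}) then returns $\bm{\lambda}_0^{\rm opt},\bm{\lambda}_1^{\rm opt}$.

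For the proximal update (\ref{ProxUpdate}), I would use the Sinkhorn form (\ref{SinkhornFormInnerArgmin}), which says the optimal coupling is $\bm{M}^{\rm opt}=\mathrm{diag}(\bm{y}^{\rm opt})\,\bm{\Gamma}_k\,\mathrm{diag}(\bm{z}^{\rm opt})$; the second-marginal constraint $(\bm{M}^{\rm opt})^{\top}\bm{1}=\brh_k$ from (\ref{CouplingMatrixConstr}) then gives $\brh_k=\bm{z}^{\rm opt}\odot(\bm{\Gamma}_k^{\top}\bm{y}^{\rm opt})$ directly (equivalently, eliminating $\brh$ from the Lagrangian gives $\brh_k=\nabla F^{\star}(-\bm{\lambda}_1^{\rm opt})=\bm{\xi}_{k-1}\odot(\bm{z}^{\rm opt})^{-\beta\epsilon/h}$, which matches by (\ref{FixedPtRecursion:b})). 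I expect the main obstacle to be bookkeeping rather than conceptual: carefully propagating the $h/\epsilon$ factors through the nested exponential substitutions, differentiating $F^{\star}$ through the sign flip and the logarithmic change of variables (which is what produces the fractional power $\bm{z}^{-\beta\epsilon/h}$ on the right of (\ref{FixedPtRecursion:b})), and justifying that $F^{\star}$ is differentiable and that the multiplier sign constraints do not bind, so that the vanishing of the gradient genuinely identifies the dual optimum.
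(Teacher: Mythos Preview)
Your proposal is correct and follows essentially the same approach as the paper: set the gradients of the dual objective with respect to $\bm{\lambda}_0$ and $\bm{\lambda}_1$ to zero, evaluate the derivative of $F^{\star}$ (the paper computes $F^{\star}$ explicitly and then differentiates, while you invoke the envelope theorem to obtain $\nabla F^{\star}$ directly, but the resulting expression $\bm{\xi}_{k-1}\odot\exp(\beta\,\cdot\,)$ is identical), substitute the exponential change of variables to produce (\ref{FixedPtRecursion:a})--(\ref{FixedPtRecursion:b}), and recover $\brh_k$ from the Sinkhorn form (\ref{SinkhornFormInnerArgmin}) via the second-marginal constraint in (\ref{CouplingMatrixConstr}). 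Your additional remarks on concavity, differentiability of $F^{\star}$, and inactivity of the sign constraints are justifications the paper leaves implicit, but they do not constitute a different argument.
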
 
\begin{proof}
By (\ref{FPKFreeEnergy1}) and (\ref{LFtransform}), we have 
\begin{align}
F^{\star}(\bm{\lambda}) = \underset{\brh \in R^{N}} \sup  \big \{{ \bm{\lambda}^{\top} \bm{\brh} -  \bm{\psi}^{\top} \bm{\brh}-\beta^{-1} \bm{\brh}^{\top} \log \bm{\brh} } \big \}. 
\label{DiscreteLF}
\end{align}
We seek an explicit algebraic expression of (\ref{DiscreteLF}) to be substituted in (\ref{DualJKO}). Setting the gradient of the objective function in $ (\ref{DiscreteLF})$ w.r.t. $\brh$ to zero, and solving for $\brh$ yields
\begin{align}
\brh_{\max} =  \exp(\beta(\bm{\lambda}-\bm{\psi})-\bm{1}). 
\label{rhomax}
\end{align}
Substituting (\ref{rhomax}) back into (\ref{DiscreteLF}), results
\begin{align}
F^{\star}(\bm{\lambda}) = \beta^{-1}\bm{1}^{\top} \exp(\beta(\bm{\lambda}-\bm{\psi})-\bm{1}). 
\label{explicitLegendre}
\end{align}
Fixing $\bm{\lambda}_1$, and taking the gradient of the objective in (\ref{DualJKO}) w.r.t. $\bm{\lambda}_0$, gives (\ref{FixedPtRecursion:a}). Likewise, fixing $\bm{\lambda}_0$, and taking the gradient of the objective in (\ref{DualJKO}) w.r.t. $\bm{\lambda}_1$ gives 
\begin{align}
\nabla_{\bm{\lambda}_1} F^{\star}(-{\bm{\lambda}_1} )=\bm{z}\odot \left({\bm{\Gamma}_{k}}^{\top} \bm{y}\right). 
\label{gradrecur}
\end{align}
Using (\ref{explicitLegendre}) to simplify the left-hand-side of (\ref{gradrecur}) results in (\ref{FixedPtRecursion:b}). To derive (\ref{ProxUpdate}), notice that combining the last equality constraint in (\ref{CouplingMatrixConstr}) with (\ref{SinkhornFormInnerArgmin}), (\ref{yzmaps}) and (\ref{Gammaxidef}) gives \[\brh_{k} = (\bm{M}^{\rm{opt}})^{\top}\bm{1} = \sum_{j=1}^{N} m^{\rm{opt}}(j,i) = \bm{z}(i)\sum_{j=1}^{N}\bm{\Gamma}_{k}(j,i)\bm{y}(j),\] 
which is equal to $\bm{z}\odot\bm{\Gamma}_{k}^{\top}\bm{y}$, as claimed.
\end{proof}

\subsection{Algorithm}

\subsubsection{Proximal recursion}
We now propose a block co-ordinate iteration scheme to solve (\ref{FixedPtRecursion}). Specifically, the proposed procedure, which we call \textproc{ProxRecur}, and detail in Algortihm \ref{ProxRecur}, takes $\brh_{k-1}$ as input and returns the proximal update $\brh_{k}$ as output for $k=1,2,\hdots$. In addition to the data $\brh_{k-1}, \bm{\psi}_{k-1}, \bm{C}_{k}, \beta, h, \epsilon, N$, the Algorithm \ref{ProxRecur} requires two parameters as user input: numerical tolerance $\delta$, and maximum number of iterations $L$. The computation in Algorithm \ref{ProxRecur}, as presented, involves making an initial guess for the vector $\bm{z}$ and then updating $\bm{y}$ and $\bm{z}$ until convergence. 
\vspace*{-0.1in}
\begin{algorithm}
    \caption{Proximal recursion to compute $\brh_{k}$ from $\brh_{k-1}$}
    \label{ProxRecur}
    \begin{algorithmic}[1] 
        \Procedure{ProxRecur}{$\brh_{k-1}$, $\bm{\psi}_{k-1}$, $\bm{C}_{k}$, $\beta$, $h$, $\epsilon$, $N$, $\delta$, $L$}
            \State $\bm{\Gamma}_{k}\gets {\exp(-\bm{C}_{k}/2\epsilon})$
            \State $\bm{\xi} \gets \exp(-\beta \bm{\psi}_{k-1} - \bm{1})$
            
            \State $\bm{z}_{0} \gets {\rm{rand}}_{N\times 1}$\Comment{initialize}
            \State $\bm{z} \gets \left[\bm{z}_{0}, \bm{0}_{N\times (L-1)}\right]$ 
            \State $\bm{y} \gets \left[\brh_{k-1} \oslash \left(\bm{\Gamma}_{k}\bm{z}_{0}\right), \bm{0}_{N\times (L-1)}\right]$
            \State $\ell=1$ \Comment{iteration index}
            
            \While{$\ell \leq L$}
                \State $\bm{z}(:,\ell+1) \gets \left(\bm{\xi}_{k-1} \oslash \left(\bm{\Gamma}_{k}^{\top}\bm{y}(:,\ell)\right)\right)^{\frac{1}{1+\beta\epsilon/h}}$
                \State $\bm{y}(:,\ell+1) \gets \bm{\brh}_{k-1} \oslash \left( \bm{\Gamma}_{k} \bm{z}(:,\ell+1)\right) $
                \If{$\parallel \bm{y}(:,\ell+1) - \bm{y}(:,\ell)\parallel < \delta \;\And\; \parallel \bm{z}(:,\ell+1)- \bm{z}(:,\ell)\parallel < \delta$ } \Comment{error within tolerance}
                \State break
                \Else
                \State $\ell \gets \ell + 1$
                \EndIf
            \EndWhile\label{euclidendwhile}
            \State \textbf{return} $\brh_{k} \gets \bm{z}(:,\ell) \odot \left( \bm{\Gamma}_{k}^{\top} \bm{y}(:,\ell) \right)$
        \EndProcedure
    \end{algorithmic}
\end{algorithm}
\vspace*{-0.1in}

Several questions arise: how can one ensure that such a procedure converges? Also, even if convergence can be guaranteed, is the rate fast in practice? The latter issue is important since the time-step $h$ in the JKO scheme is small, and during the computation of Algorithm 1, the physical time is ``frozen". We will establish the convergence guaranteed by showing certain contractive properties of the recursion given in Algorithm \ref{ProxRecur}. Before doing so, we next outline the overall algorithmic setup to implement the proximal recursion over probability weighted scattered point cloud data.

\begin{figure}[h]
\centering
\includegraphics[width=.65\linewidth]{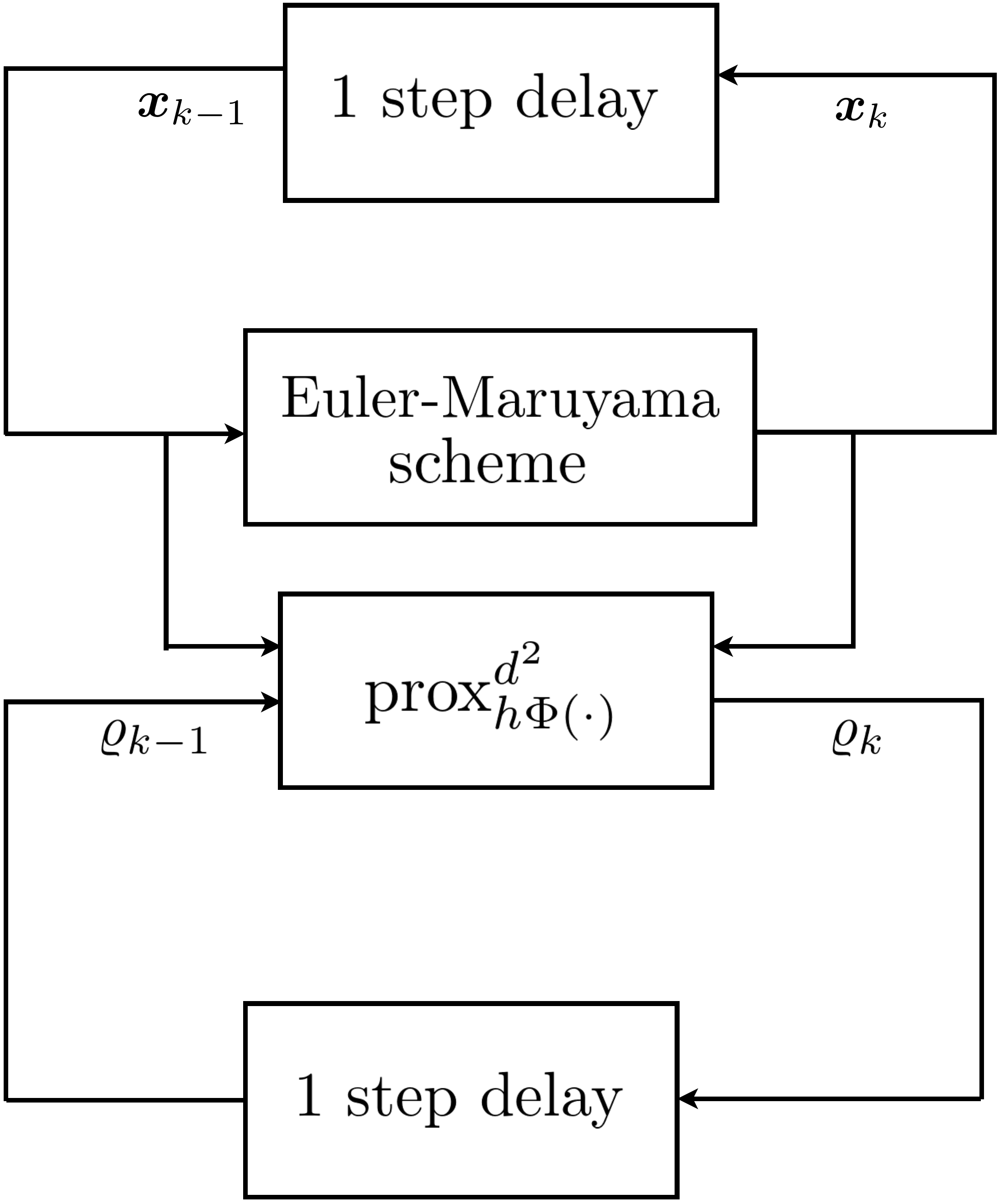}
\caption{\small{Schematic of the proposed algorithmic setup for propagating the joint state PDF as probability weighted scattered point cloud $\{\bx_{k}^{i},\varrho_{k}^{i}\}_{i=1}^{N}$. The location of the points $\{\bx_{k}^{i}\}_{i=1}^{N}$ are updated via Euler-Maruyama scheme; the corresponding probability weights are updated via Algorithm \ref{ProxRecur}. 
}}
\vspace*{-0.25in}
\label{BlockDiagm}
\end{figure}

\subsubsection{Overall scheme}
Samples from the \emph{known} initial joint PDF $\rho_{0}$ are generated as point cloud $\{\bx_{0}^{i},\varrho_{0}^{i}\}_{i=1}^{N}$. Then for $k=1,2,\hdots$, the point clouds $\{\bx_{k}^{i},\varrho_{k}^{i}\}_{i=1}^{N}$ are updated as shown in Fig. \ref{BlockDiagm}. Specifically, the state vectors are updated via Euler-Maruyama scheme applied to the underlying SDE; the corresponding probability weights are updated via Algorithm \ref{ProxRecur}. Notice that computing $\bm{C}_{k}$ requires both $\{\bm{x}_{k-1}^{i}\}_{i=1}^{N}$ and $\{\bm{x}_{k}^{i}\}_{i=1}^{N}$, and that $\bm{C}_{k}$ needs to be passed as input to Algorithm \ref{ProxRecur}. Thus, the execution of Euler-Maruyama scheme precedes that of Algorithm \ref{ProxRecur}.

\subsection{Convergence}
The following Definition \ref{DefnThompson} and Proposition \ref{PropOrderPreservingMap} will be useful in proving Theorem \ref{ThmConvergence} that follows which establishes the convergence of Algorithm \ref{ProxRecur}.
\begin{definition}(\textbf{Thompson metric})
\label{DefnThompson}
Consider $\bm{z}, \widetilde{\bm{z}} \in \mathcal{K}$, where $\mathcal{K}$ is a non-empty open convex cone. Further, suppose that $\mathcal{K}$ is a normal cone, i.e., there exists constant $\alpha$ such that $\parallel \bm{z} \parallel \leq \alpha \parallel \widetilde{\bm{z}} \parallel$ for $\bm{z} \leq \widetilde{\bm{z}}$. Thompson \cite{thompson1963certain} proved that	$\mathcal{K}$ is a complete metric space w.r.t. the so-called \emph{Thompson metric} given by
\[d_{\rm{T}}\left(\bm{z},\widetilde{\bm{z}}\right) := \max\{\log \gamma(\bm{z}/\widetilde{\bm{z}}), \log \gamma(\widetilde{\bm{z}}/\bm{z})\},\]
where $\gamma(\bm{z}/\widetilde{\bm{z}}) := \inf\{c>0 \mid \bm{z} \leq c\widetilde{\bm{z}}\}$. In particular, if $\mathcal{K}\equiv \mathbb{R}^{n}_{>0}$ (positive orthant of $\mathbb{R}^{n}$), then
\begin{eqnarray}
d_{\rm{T}}\left(\bm{z},\widetilde{\bm{z}}\right) = \log\max\bigg\{\max_{i=1,\hdots,n}\left(\frac{\bm{z}_{i}}{\widetilde{\bm{z}}_{i}}\right), \max_{i=1,\hdots,n}\left(\frac{\widetilde{\bm{z}}_{i}}{\bm{z}_{i}}\right)\bigg\}.
\label{ThompsonPosOrthant}	
\end{eqnarray}
\end{definition}
\begin{proposition}\label{PropOrderPreservingMap}\cite[Proposition 3.2]{lim2009nonlinear},\cite{nussbaum1988hilbert}
Let $\mathcal{K}$ be an open, normal, convex cone and let $\bm{\phi} : \mathcal{K} \mapsto \mathcal{K}$ be an order preserving homogeneous map of degree $r \geq 0$. Then, for all $\bm{z}, \widetilde{\bm{z}} \in \mathcal{K}$, we have
\[d_{\rm{T}}\left(\bm{\phi}(\bm{z}),\bm{\phi}(\widetilde{\bm{z}})\right) \leq r d_{\rm{T}}\left(\bm{z},\widetilde{\bm{z}}\right).\]
In particular, if $r\in[0,1)$, then the map $\bm{\phi}(\cdot)$ is strictly contractive in the Thompson metric $d_{\rm{T}}$, and admits unique fixed point in $\mathcal{K}$.
\end{proposition}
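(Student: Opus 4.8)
The plan is to combine three ingredients --- the partial order induced by the cone, the degree-$r$ homogeneity of $\bm{\phi}$, and the definition of $\gamma(\cdot/\cdot)$ --- into one-sided comparison inequalities, and then propagate them through the logarithm and the outer maximum that build up $d_{\rm{T}}$. Fix $\bm{z},\widetilde{\bm{z}}\in\mathcal{K}$ and write $M:=\gamma(\bm{z}/\widetilde{\bm{z}})$ and $M':=\gamma(\widetilde{\bm{z}}/\bm{z})$. The first step is to check that these infima are attained, i.e., $\bm{z}\leq M\widetilde{\bm{z}}$ and $\widetilde{\bm{z}}\leq M'\bm{z}$. Because $\widetilde{\bm{z}}$ lies in the open cone, $c\widetilde{\bm{z}}-\bm{z}\in\mathcal{K}$ for all sufficiently large $c$, so $S:=\{c>0:\bm{z}\leq c\widetilde{\bm{z}}\}$ is nonempty; $S$ is an up-set (adding $(c'-c)\widetilde{\bm{z}}\in\overline{\mathcal{K}}$ preserves membership), it is closed as the preimage of the closed set $\overline{\mathcal{K}}$ under the continuous map $c\mapsto c\widetilde{\bm{z}}-\bm{z}$, and normality (in particular pointedness) of $\mathcal{K}$ forces $\inf S=M>0$; hence $S=[M,\infty)$ and $\bm{z}\leq M\widetilde{\bm{z}}$, and symmetrically $\widetilde{\bm{z}}\leq M'\bm{z}$.

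Next I would push these bounds through $\bm{\phi}$. Applying order preservation to $\bm{z}\leq M\widetilde{\bm{z}}$ and then homogeneity of degree $r$ yields $\bm{\phi}(\bm{z})\leq\bm{\phi}(M\widetilde{\bm{z}})=M^{r}\bm{\phi}(\widetilde{\bm{z}})$, which by definition of $\gamma$ gives $\gamma\big(\bm{\phi}(\bm{z})/\bm{\phi}(\widetilde{\bm{z}})\big)\leq M^{r}$; by the symmetric argument, $\gamma\big(\bm{\phi}(\widetilde{\bm{z}})/\bm{\phi}(\bm{z})\big)\leq (M')^{r}$. (Here $\bm{\phi}(\bm{z}),\bm{\phi}(\widetilde{\bm{z}})\in\mathcal{K}$ since $\bm{\phi}$ maps $\mathcal{K}$ into $\mathcal{K}$, so $d_{\rm{T}}$ applies.) Taking logarithms, which is order preserving, and using $r\geq 0$, I get $\log\gamma\big(\bm{\phi}(\bm{z})/\bm{\phi}(\widetilde{\bm{z}})\big)\leq r\log M$ and $\log\gamma\big(\bm{\phi}(\widetilde{\bm{z}})/\bm{\phi}(\bm{z})\big)\leq r\log M'$; these hold irrespective of the signs of $\log M$ and $\log M'$. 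Taking the maximum of the two left-hand sides, and using monotonicity of $\max\{\cdot,\cdot\}$ together with the identity $\max\{ra,rb\}=r\max\{a,b\}$ valid for $r\geq 0$, I obtain $d_{\rm{T}}\big(\bm{\phi}(\bm{z}),\bm{\phi}(\widetilde{\bm{z}})\big)\leq r\max\{\log M,\log M'\}=r\,d_{\rm{T}}(\bm{z},\widetilde{\bm{z}})$, which is the asserted inequality.

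For the concluding statement, when $r\in[0,1)$ the inequality just proved says precisely that $\bm{\phi}$ is a strict contraction of the metric space $(\mathcal{K},d_{\rm{T}})$; this space is complete by Thompson's theorem recalled in Definition \ref{DefnThompson}, so the Banach fixed-point theorem furnishes a unique fixed point of $\bm{\phi}$ in $\mathcal{K}$. I expect the only delicate point to be the attainment of the infima $M$ and $M'$, which is exactly where closedness of $\overline{\mathcal{K}}$ and normality of the cone enter; the small sign bookkeeping in the logarithm step --- that only the outer maximum, not each individual term, need be nonnegative --- is the other easy-to-overlook detail, while everything else is a direct chain of the defining properties of order-preserving homogeneous maps.
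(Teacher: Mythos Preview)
Your argument is correct and is essentially the standard proof of this fact as found in the cited references. Note, however, that the paper itself does not supply a proof of this proposition: it is stated with attribution to \cite[Proposition 3.2]{lim2009nonlinear} and \cite{nussbaum1988hilbert} and then used as a black box in the proof of Theorem~\ref{ThmConvergence}. So there is no in-paper proof to compare against; your write-up simply fills in what the paper outsources to the literature, and it does so along the expected lines (attainment of the infimum defining $\gamma$, push the order inequality through $\bm{\phi}$ via monotonicity and degree-$r$ homogeneity, then invoke completeness of $(\mathcal{K},d_{\rm{T}})$ and Banach's fixed-point theorem).
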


Using (\ref{ThompsonPosOrthant}) and Proposition \ref{PropOrderPreservingMap}, we establish the convergence result below.
\begin{theorem}\label{ThmConvergence}
Consider the notations in (\ref{yzmaps})-(\ref{Gammaxidef}), and those in Algorithm \ref{ProxRecur}. The iteration
\begin{align}
\bm{z}(:,\ell+1) &= \left(\bm{\xi}_{k-1} \oslash \left(\bm{\Gamma}_{k}^{\top}\bm{y}(:,\ell)\right)\right)^{\frac{1}{1+\beta\epsilon/h}}\nonumber\\
&= \left(\bm{\xi}_{k-1} \oslash \left(\bm{\Gamma}_{k}^{\top}\bm{\brh}_{k-1} \oslash \left( \bm{\Gamma}_{k} \bm{z}(:,\ell)\right)\right)\right)^{\frac{1}{1+\beta\epsilon/h}}
\label{ConvThmIter}	
\end{align}
for $\ell = 1, 2, \hdots$, is strictly contractive in the Thompson metric (\ref{ThompsonPosOrthant}) on $\mathbb{R}^{n}_{>0}$, and admits unique fixed point $\bm{z}^{\rm{opt}} \in \mathbb{R}^{n}_{>0}$. 
\end{theorem}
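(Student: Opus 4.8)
The plan is to recognize the iteration map in \eqref{ConvThmIter} as a composition of elementary maps on the positive orthant $\mathbb{R}^n_{>0}$, and to track how each piece acts on the Thompson metric via Proposition \ref{PropOrderPreservingMap}. Write the single-step map as $\bm{z}(:,\ell+1) = \bm{\phi}(\bm{z}(:,\ell))$, where $\bm{\phi} = \bm{\phi}_4 \circ \bm{\phi}_3 \circ \bm{\phi}_2 \circ \bm{\phi}_1$ with $\bm{\phi}_1(\bm{z}) := \bm{\Gamma}_k \bm{z}$ (linear, positive), $\bm{\phi}_2(\bm{u}) := \brh_{k-1} \oslash \bm{u}$ (componentwise inversion up to a positive scaling), $\bm{\phi}_3(\bm{v}) := \bm{\Gamma}_k^\top \bm{v}$ (again linear, positive), and $\bm{\phi}_4(\bm{w}) := \bm{\xi}_{k-1} \oslash \bm{w}$ followed by the componentwise exponentiation to the power $\frac{1}{1+\beta\epsilon/h}$. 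Equivalently, one can absorb $\bm{\phi}_2$ and $\bm{\phi}_4$ into a single exponent bookkeeping since each is an order-reversing, homogeneous-of-degree-$(-1)$ map composed with a fixed positive diagonal scaling.

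First I would record the relevant facts about each elementary map. A linear map with a strictly positive matrix, such as $\bm{z}\mapsto\bm{\Gamma}_k\bm{z}$ (note $\bm{\Gamma}_k = \exp(-\bm{C}_k/2\epsilon)$ has all entries strictly positive), sends $\mathbb{R}^n_{>0}$ into itself, is order preserving, and is homogeneous of degree $1$; hence by Proposition \ref{PropOrderPreservingMap} it is nonexpansive in $d_{\rm T}$. The Hadamard-division-by-a-constant-vector maps $\bm{u}\mapsto \brh_{k-1}\oslash\bm{u}$ and $\bm{w}\mapsto\bm{\xi}_{k-1}\oslash\bm{w}$ are order \emph{reversing} and homogeneous of degree $-1$; since $d_{\rm T}(\bm{z},\widetilde{\bm z}) = d_{\rm T}(\bm{z}^{-1},\widetilde{\bm z}^{-1})$ and $d_{\rm T}$ is invariant under multiplication by a fixed positive vector, these maps are also $d_{\rm T}$-isometries (in particular nonexpansive). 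Finally, the componentwise power map $\bm{w}\mapsto\bm{w}^{p}$ with $p = \frac{1}{1+\beta\epsilon/h}$ is order preserving and homogeneous of degree $p$, and because $\beta,\epsilon,h>0$ we have $p\in(0,1)$; by Proposition \ref{PropOrderPreservingMap} it is a strict contraction with ratio $p$.

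Next I would chain these estimates. Composing the four maps, the homogeneity degrees multiply, so $\bm{\phi}$ is order preserving (an even number — in fact two — of order-reversals) and homogeneous of degree $r = 1\cdot(-1)\cdot 1\cdot(-1)\cdot p = p = \frac{1}{1+\beta\epsilon/h} \in [0,1)$; applying Proposition \ref{PropOrderPreservingMap} once more to the composite gives $d_{\rm T}(\bm{\phi}(\bm{z}),\bm{\phi}(\widetilde{\bm z})) \le \frac{1}{1+\beta\epsilon/h}\, d_{\rm T}(\bm{z},\widetilde{\bm z})$ for all $\bm{z},\widetilde{\bm z}\in\mathbb{R}^n_{>0}$. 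Since $\mathbb{R}^n_{>0}$ is a complete metric space under $d_{\rm T}$ (Definition \ref{DefnThompson}, with the positive orthant being a normal cone), the Banach fixed point theorem yields a unique fixed point $\bm{z}^{\rm opt}\in\mathbb{R}^n_{>0}$, and the iterates converge to it geometrically with rate $p$. One should also check the well-definedness of the iteration, i.e.\ that $\bm{\phi}$ maps $\mathbb{R}^n_{>0}$ into itself: this holds because $\bm{\Gamma}_k$, $\brh_{k-1}$, $\bm{\xi}_{k-1}$ all have strictly positive entries (the last since $\psi$ is finite-valued), so no zero or infinite components are ever produced.

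The main obstacle is the bookkeeping for the order-reversing factors: Proposition \ref{PropOrderPreservingMap} as stated applies to order \emph{preserving} maps, so I must justify cleanly that Hadamard inversion by a fixed positive vector is a $d_{\rm T}$-isometry — this rests on the two elementary identities $d_{\rm T}(\bm{z}^{-1},\widetilde{\bm z}^{-1}) = d_{\rm T}(\bm{z},\widetilde{\bm z})$ and $d_{\rm T}(\bm{a}\odot\bm{z},\bm{a}\odot\widetilde{\bm z}) = d_{\rm T}(\bm{z},\widetilde{\bm z})$, both immediate from the explicit formula \eqref{ThompsonPosOrthant} — and then argue that the composite of maps, although built from order-reversing pieces, is globally order preserving so that Proposition \ref{PropOrderPreservingMap} may be invoked on $\bm{\phi}$ directly. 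The alternative, cleaner route (which I would actually present) is to not track order-preservation of the pieces at all, but simply to multiply contraction ratios: each linear-positive map and each inversion is nonexpansive, the power map contracts by $p<1$, hence the composite contracts by $p$. Everything else is routine verification that the positive orthant is a normal cone and that the iteration stays in it.
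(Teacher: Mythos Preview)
Your proof is correct and follows the same overall strategy as the paper: write the iteration as a composition of elementary cone-preserving maps, bound each factor in the Thompson metric via Proposition~\ref{PropOrderPreservingMap} (or the elementary isometry identities you state), and conclude by the Banach fixed point theorem on $(\mathbb{R}^n_{>0},d_{\rm T})$.

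The decompositions differ in a substantive way, though. The paper first rewrites the iteration as $\bigl(\bm{\xi}_{k-1}\oslash(\bm{\Gamma}_k^\top\brh_{k-1})\oslash(\bm{\Gamma}_k\bm{z})\bigr)^{1/(1+\beta\epsilon/h)}$ and then uses a four-map factorization $\bm{\theta}_1\circ\bm{\theta}_2\circ\bm{\theta}_3\circ\bm{\theta}_4$ with $\bm{\theta}_4(\bm{z})=\bm{\Gamma}_k\bm{z}$ (contractive by Perron--Frobenius), $\bm{\theta}_3(\bm{z})=\bm{1}\oslash\bm{z}$ and $\bm{\theta}_2(\bm{z})=\bm{\eta}\odot\bm{z}$ (isometries), and $\bm{\theta}_1(\bm{z})=\bm{z}^{1/(1+\beta\epsilon/h)}$ (strictly contractive). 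Your five-map decomposition instead keeps the second positive linear map $\bm{\Gamma}_k^\top$ explicit as a separate nonexpansive factor, and this is actually more faithful to the iteration as defined in Algorithm~\ref{ProxRecur}: the paper's algebraic rewrite tacitly replaces $\bm{\Gamma}_k^\top\bigl(\brh_{k-1}\oslash(\bm{\Gamma}_k\bm{z})\bigr)$ by $(\bm{\Gamma}_k^\top\brh_{k-1})\oslash(\bm{\Gamma}_k\bm{z})$, which is not an identity in general. Your ``cleaner route'' of simply multiplying Lipschitz ratios (two nonexpansive linear maps, two $d_{\rm T}$-isometric Hadamard inversions, one strict contraction by $p=1/(1+\beta\epsilon/h)$) avoids having to verify order-preservation of the full composite and yields the same contraction rate; it is arguably the tidier presentation.
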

\begin{proof}
Rewriting (\ref{ConvThmIter}) as
\begin{eqnarray*}
\bm{z}(:,\ell+1) = \left(\bm{\xi}_{k-1} \oslash \left(\bm{\Gamma}_{k}^{\top}\bm{\brh}_{k-1}\right) \oslash \left( \bm{\Gamma}_{k} \bm{z}(:,\ell)\right)\right)^{\frac{1}{1+\beta\epsilon/h}},	
\end{eqnarray*}
and letting $\bm{\eta} \equiv \bm{\eta}_{k,k+1} := \bm{\xi}_{k-1} \oslash \left(\bm{\Gamma}_{k}^{\top}\bm{\brh}_{k-1}\right)$, we notice that iteration (\ref{ConvThmIter}) can be expressed as a cone preserving composite map $\bm{\theta} := \bm{\theta}_{1} \circ \bm{\theta}_{2} \circ \bm{\theta}_{3} \circ \bm{\theta}_{4}$, where $\bm{\theta} : \mathbb{R}^{n}_{>0} \mapsto \mathbb{R}^{n}_{>0}$, given by 
\begin{eqnarray}
\bm{z}(:,\ell+1) = \bm{\theta}\left(\bm{z}(:,\ell)\right) = \bm{\theta}_{1} \circ \bm{\theta}_{2} \circ \bm{\theta}_{3} \circ \bm{\theta}_{4} \: \left(\bm{z}(:,\ell)\right),
\end{eqnarray}
and $\bm{\theta}_{1}(\bm{z}) := \bm{z}^{\frac{1}{1+\beta\epsilon/h}}$, $\bm{\theta}_{2}(\bm{z}) := \bm{\eta}\odot\bm{z}$, $\bm{\theta}_{3} := \bm{1} \oslash \bm{z}$, $\bm{\theta}_{4}(\bm{z}) := \bm{\Gamma}_{k} \bm{z}$. Our strategy is to prove that the composite map $\bm{\theta}$ is contractive on $\mathbb{R}^{n}_{>0}$ w.r.t. the metric $d_{\rm{T}}$.

From (\ref{Gammaxidef}), notice that since $\bm{C}_{k}(i,j)\in[0,\infty)$ we have $\bm{\Gamma}_{k}(i,j) \in (0,1]$; therefore, $\bm{\Gamma}_{k}$ is a positive linear map for each $k=1,2,\hdots$. Thus, by (linear) Perron-Frobenius theorem, the map $\bm{\theta}_{4}$ is contractive on $\mathbb{R}^{n}_{>0}$ w.r.t. $d_{\rm{T}}$. The map $\bm{\theta}_{3}$ involves element-wise inversion, which is an isometry on $\mathbb{R}^{n}_{>0}$ w.r.t. $d_{\rm{T}}$. Also, the map $\bm{\theta}_{2}$ is an isometry by Definition \ref{DefnThompson}. As for the map $\bm{\theta}_{1}$, notice that the quantity $r := 1/(1 + \beta\epsilon/h) \in (0,1)$ since $\beta\epsilon/h > 0$. Therefore, the map $\bm{\theta}_{1}(\bm{z}) := \bm{z}^{r}$ (element-wise exponentiation) is monotone (order preserving) and homogeneous of degree $r\in(0,1)$ on $\mathbb{R}^{n}_{>0}$. By Proposition \ref{PropOrderPreservingMap}, the map $\bm{\theta}_{1}(\bm{z})$ is strictly contractive. Thus, the composition
\[\bm{\theta} = \underbrace{\bm{\theta}_{1}}_{\text{strictly contractive}} \circ \underbrace{\bm{\theta}_{2}}_{\text{isometry}} \circ \underbrace{\bm{\theta}_{3}}_{\text{isometry}} \circ \underbrace{\bm{\theta}_{4}}_{\text{contractive}}\]
is strictly contractive w.r.t. $d_{\rm{T}}$, and (by Banach contraction mapping theorem) admits unique fixed point $\bm{z}^{\rm{opt}}$ in $\mathbb{R}^{n}_{>0}$. 
\end{proof}
\begin{corollary}\label{CorrConv}
The Algorithm \ref{ProxRecur} converges to unique fixed point $(\bm{y}^{\rm{opt}},\bm{z}^{\rm{opt}}) \in \mathbb{R}^{n}_{>0} \times \mathbb{R}^{n}_{>0}$.	
\end{corollary}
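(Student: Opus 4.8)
The plan is to reduce the two-block alternating iteration in Algorithm \ref{ProxRecur} to the single-variable recursion already analyzed in Theorem \ref{ThmConvergence}, and then push the conclusion back to the $\bm{y}$ component by continuity. First I would observe that eliminating $\bm{y}(:,\ell)$ by substituting the update on line 10 of Algorithm \ref{ProxRecur} into line 9 produces exactly the composite map $\bm{\theta} = \bm{\theta}_{1}\circ\bm{\theta}_{2}\circ\bm{\theta}_{3}\circ\bm{\theta}_{4}$ acting on $\bm{z}(:,\ell)$, i.e.\ the iteration (\ref{ConvThmIter}). Hence the sequence $\{\bm{z}(:,\ell)\}$ generated by the algorithm is precisely the orbit of $\bm{\theta}$ started from the randomly chosen (hence almost surely strictly positive) vector $\bm{z}_{0}$.

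Next I would check that this orbit remains in the cone $\mathbb{R}^{n}_{>0}$. Since $\brh_{k-1}>0$ (strictly positive probability weights), $\bm{\xi}_{k-1} = \exp(-\beta\bm{\psi}_{k-1}-\bm{1})>0$, and $\bm{\Gamma}_{k}$ has strictly positive entries (as shown in the proof of Theorem \ref{ThmConvergence}), each of the maps $\bm{\theta}_{4}$ (a strictly positive matrix acting on a positive vector), $\bm{\theta}_{3}$ (element-wise reciprocal), $\bm{\theta}_{2}$ (Hadamard product with the positive vector $\bm{\eta}$), and $\bm{\theta}_{1}$ (positive fractional power) maps $\mathbb{R}^{n}_{>0}$ into itself. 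By induction $\bm{z}(:,\ell)\in\mathbb{R}^{n}_{>0}$ for all $\ell$, and consequently $\bm{y}(:,\ell) = \brh_{k-1}\oslash\left(\bm{\Gamma}_{k}\bm{z}(:,\ell)\right)\in\mathbb{R}^{n}_{>0}$ as well, so every quantity in Algorithm \ref{ProxRecur} is well defined (no division by zero).

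Then I would invoke Theorem \ref{ThmConvergence}: the map $\bm{\theta}$ is strictly contractive on the complete metric space $(\mathbb{R}^{n}_{>0},d_{\rm{T}})$, so by the Banach contraction mapping theorem its iterates converge to the unique fixed point $\bm{z}^{\rm{opt}}$ \emph{regardless of the initialization} $\bm{z}_{0}$; in particular $\bm{z}(:,\ell)\to\bm{z}^{\rm{opt}}$. Convergence in the Thompson metric on the positive orthant implies entry-wise convergence in the usual topology, and by continuity of the map $\bm{z}\mapsto\brh_{k-1}\oslash\left(\bm{\Gamma}_{k}\bm{z}\right)$ on $\mathbb{R}^{n}_{>0}$ we obtain $\bm{y}(:,\ell)\to\bm{y}^{\rm{opt}} := \brh_{k-1}\oslash\left(\bm{\Gamma}_{k}\bm{z}^{\rm{opt}}\right)\in\mathbb{R}^{n}_{>0}$. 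For uniqueness of the limiting pair, note that at any fixed point $(\bm{y},\bm{z})$ of the alternating iteration one has $\bm{y} = \brh_{k-1}\oslash\left(\bm{\Gamma}_{k}\bm{z}\right)$, and substituting this into the $\bm{z}$-update yields $\bm{z} = \bm{\theta}(\bm{z})$; hence $\bm{z} = \bm{z}^{\rm{opt}}$ and therefore $\bm{y} = \bm{y}^{\rm{opt}}$.

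The contraction estimate itself has already been carried out in Theorem \ref{ThmConvergence}, so the only residual subtleties are the bookkeeping around positivity/well-posedness of the iterates and the (standard) observation that the Banach theorem delivers convergence of the iterates and not merely existence of a fixed point; I expect neither to present a genuine obstacle.
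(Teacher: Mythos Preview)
Your proposal is correct and follows essentially the same route as the paper: reduce the alternating $(\bm{y},\bm{z})$ iteration to the single $\bm{z}$-recursion of Theorem \ref{ThmConvergence}, invoke that theorem for convergence of the $\bm{z}$ iterates, and then recover convergence of the $\bm{y}$ iterates from the relation $\bm{y}(:,\ell+1)=\brh_{k-1}\oslash(\bm{\Gamma}_{k}\bm{z}(:,\ell+1))$. Your treatment is in fact more careful than the paper's---you explicitly verify positivity of the iterates, argue $\bm{y}$-convergence via continuity rather than a loose Perron--Frobenius appeal, and spell out uniqueness of the limiting pair---but the underlying strategy is the same.
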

\begin{proof}
Since $\bm{y}(:,\ell+1) = \bm{\brh}_{k-1} \oslash \left( \bm{\Gamma}_{k} \bm{z}(:,\ell+1)\right)$, the $\bm{z}$ iterates converge to unique fixed point $\bm{z}^{\rm{opt}}\in\mathbb{R}^{n}_{>0}$ (by Theorem \ref{ThmConvergence}), and the linear maps $\bm{\Gamma}_{k}$ are contractive (by Perron-Frebenius theory, as before), consequently the $\bm{y}$ iterates also converge to unique fixed point $\bm{y}^{\rm{opt}}\in\mathbb{R}^{n}_{>0}$. Hence the statement.
\end{proof}


%
%
%
%
%

\begin{figure*}[t]
\centering
\includegraphics[width=.98\linewidth]{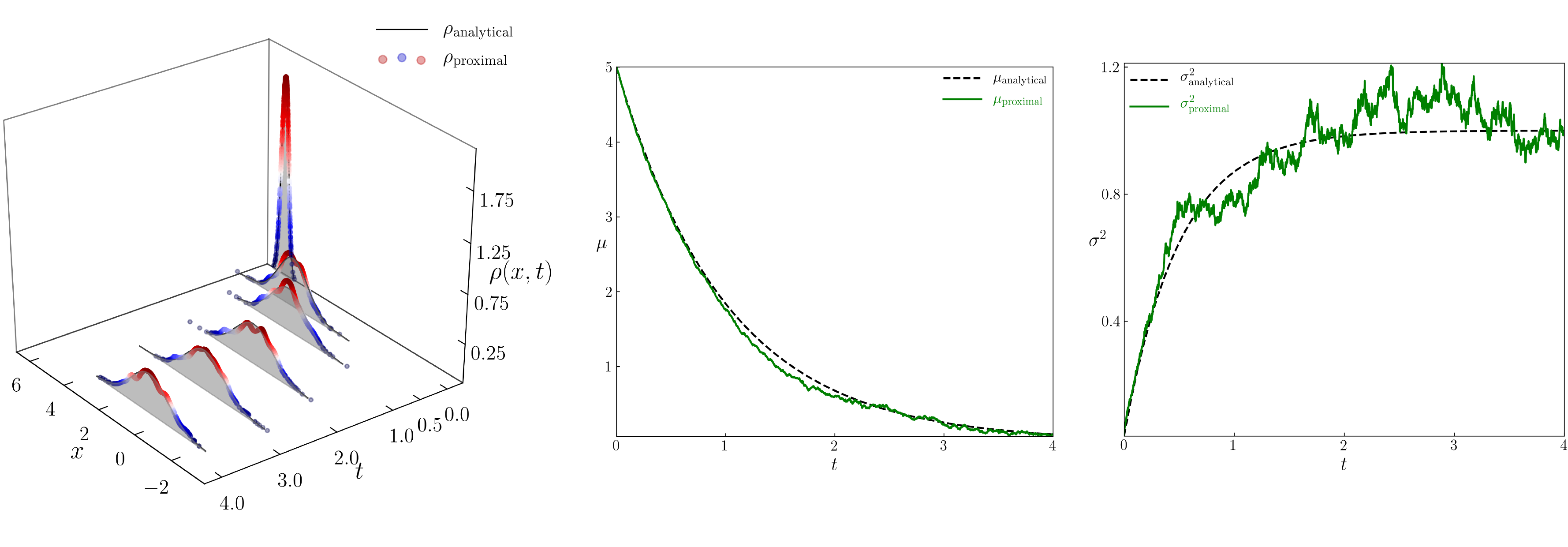}
\vspace*{-0.15in}
\caption{\small{Comparison of the analytical and proximal solutions of the FPK PDE for (\ref{OU}) with time step $h=10^{-3}$, and with parameters $a=1$, $\beta=1$, $\epsilon = 5\times 10^{-2}$. Shown above are the time evolution of the (\emph{left}) PDFs, (\emph{middle}) means, and (\emph{right}) variances.}}
\vspace*{-0.2in}
\label{1dOU}
\end{figure*}

\section{Numerical Simulation}
In this section, we apply the algorithmic setup proposed in Section 	IV.B to few examples illustrating the numerical approach. Our examples involve systems which are already in JKO canonical form (Section III), as well as those which can be transformed to such form by non-obvious change of coordinates. 

\subsection{Linear Gaussian System}
For an It\^{o} SDE of the form 
\begin{eqnarray}
\differential \bx = \bm{A}\bx \: \differential t + \bm{B} \: \differential \bw, \label{linearSDE}
\end{eqnarray}
it is well known that if $\bx_{0} := \bx(t=0) \sim \mathcal{N}(\bm{\mu}_0,\bm{\Sigma}_0)$,  then the transient joint PDFs $\rho(\bx,t) = \mathcal{N}(\bm{\mu}(t),\bm{\Sigma}(t))$ where the vector-matrix pair $\left(\bm{\mu}(t),\bm{\Sigma}(t)\right)$ evolve according to the ODEs 
\begin{subequations}
\begin{align}
\dot{\bm{\mu}}(t) &= \bm{A}\bm{\mu}, \quad \bm{\mu}(0)=\bm{\mu}_0, \label{meanODE}\\
\dot{\bm{\Sigma}}(t) &= \bm{A}\bm{\Sigma(t)} + \bm{A} \bm{\Sigma(t)}^{\top} + \bm{B} \bm{B}^{\top}, \quad \bm{\Sigma}(0) = \bm{\Sigma}_0.\label{covODE}	
\end{align}
\label{MeanCovODE}	
\end{subequations}
We benchmark the numerical results produced by the proposed proximal algorithm vis-\`{a}-vis the above analytical solutions. We consider the following two sub-cases of (\ref{linearSDE}).

\subsubsection{Ornstein-Uhlenbeck Process}
We consider the 1D system 
\begin{eqnarray}
\differential x = -ax \: \differential t + \sqrt{2\beta^{-1}} \differential w, \quad a,\beta > 0,	
\label{OU}
\end{eqnarray}
which is in JKO canonical form with $\psi(x) = \frac{1}{2}ax^{2}$. We generate $N=400$ samples from the initial PDF $\rho_{0} = \mathcal{N}(\mu_{0},\sigma_{0}^{2})$ with $\mu_{0}=5$ and $\sigma_{0}^{2}=4\times 10^{-2}$, and apply the proposed proximal recursion for (\ref{OU}) with time step $h=10^{-3}$, and with parameters $a=1$, $\beta=1$, $\epsilon = 5\times 10^{-2}$. For implementing Algorithm \ref{ProxRecur}, we set tolerance $\delta = 10^{-3}$, and maximum number of iterations $L=100$. Fig. \ref{1dOU} shows that the PDF point clouds generated by the proximal recursion match with the analytical PDFs $\mathcal{N}\left(\mu_{0}\exp(-at),(\sigma_{0}^{2}-\frac{1}{a\beta})\exp(-2at) + \frac{1}{a\beta}\right)$, and the mean-variance trajectories (computed from the numerical integration of the point cloud data) match with the corresponding analytical solutions.

\subsubsection{Multivariate LTI}
We next consider the multivariate case (\ref{linearSDE}) where the pair $(\bm{A},\bm{B})$ is assumed to be controllable, and the matrix $\bm{A}$ is Hurwitz (not necessarily symmetric). Under these assumptions, the stationary PDF is $\mathcal{N}(\bm{0},\bm{\Sigma}_{\infty})$ where $\bm{\Sigma}_{\infty}$ is the unique stationary solution of (\ref{covODE}) that is guaranteed to be symmetric positive definite. However, it is not apparent whether (\ref{linearSDE})  can be expressed in the form (\ref{ItoGradient}), since for non-symmetric $\bm{A}$, there does \emph{not} exist constant symmetric positive definite matrix $\bm{\Psi}$ such that $\bm{Ax} = -\nabla\bx^{\top} \bm{\Psi} \bx$, i.e., the drift vector field does not admit a natural potential. Thus, implementing the JKO scheme for (\ref{linearSDE}) is non-trivial in general.

In a recent work \cite{halder2017gradient}, two successive \emph{time-varying} co-ordinate transformations were given which can bring (\ref{linearSDE}) in the form (\ref{ItoGradient}), thus making it amenable to the JKO scheme. We apply these change-of-coordinates to (\ref{linearSDE}) with 
\[\bm{A} = \begin{pmatrix}
 -10 & 5\\
 -30 & 0	
 \end{pmatrix}, \quad \bm{B} = \begin{pmatrix}
 2\\2.5	
 \end{pmatrix},
\]
which satisfy the stated assumptions on $(\bm{A},\bm{B})$, and implement the proposed proximal recursion on this transformed co-ordinates with $N=400$ samples generated from the initial PDF $\rho_{0} = \mathcal{N}(\bm{\mu}_{0},\bm{\Sigma}_{0})$, where $\bm{\mu}_{0}=(4,4)^{\top}$ and $\bm{\Sigma}_{0}=4\bm{I}_{2}$. As before, we set $\delta = 10^{-3}, L=100, h=10^{-3}, \beta=1, \epsilon = 5\times 10^{-2}$. Once the proximal updates are done, we transform back the probability weighted scattered point cloud to the original state space co-ordinates via change-of-measure formula associated with the known co-ordinate transforms \cite[Section III.B]{halder2017gradient}. Fig. \ref{Hurwitz} shows the resulting point clouds superimposed with the contour plots for the analytical solutions $\mathcal{N}(\bm{\mu}(t),\bm{\Sigma}(t))$ given by (\ref{MeanCovODE}). Figs. \ref{Hurwitzmean} and \ref{Hurwitzcovar} compare the respective mean and covariance evolution. We point out that the change of co-ordinates in \cite{halder2017gradient} requires implementing the JKO scheme in a time-varying rotating frame (defined via exponential of certain time varying skew-symmetric matrix) that depends on the stationary covariance $\bm{\Sigma}_{\infty}$. As a consequence, the stationary covariance resulting from the proximal recursion oscillates about the true stationary value.

\begin{figure}[h]
\centering
\includegraphics[width=\linewidth]{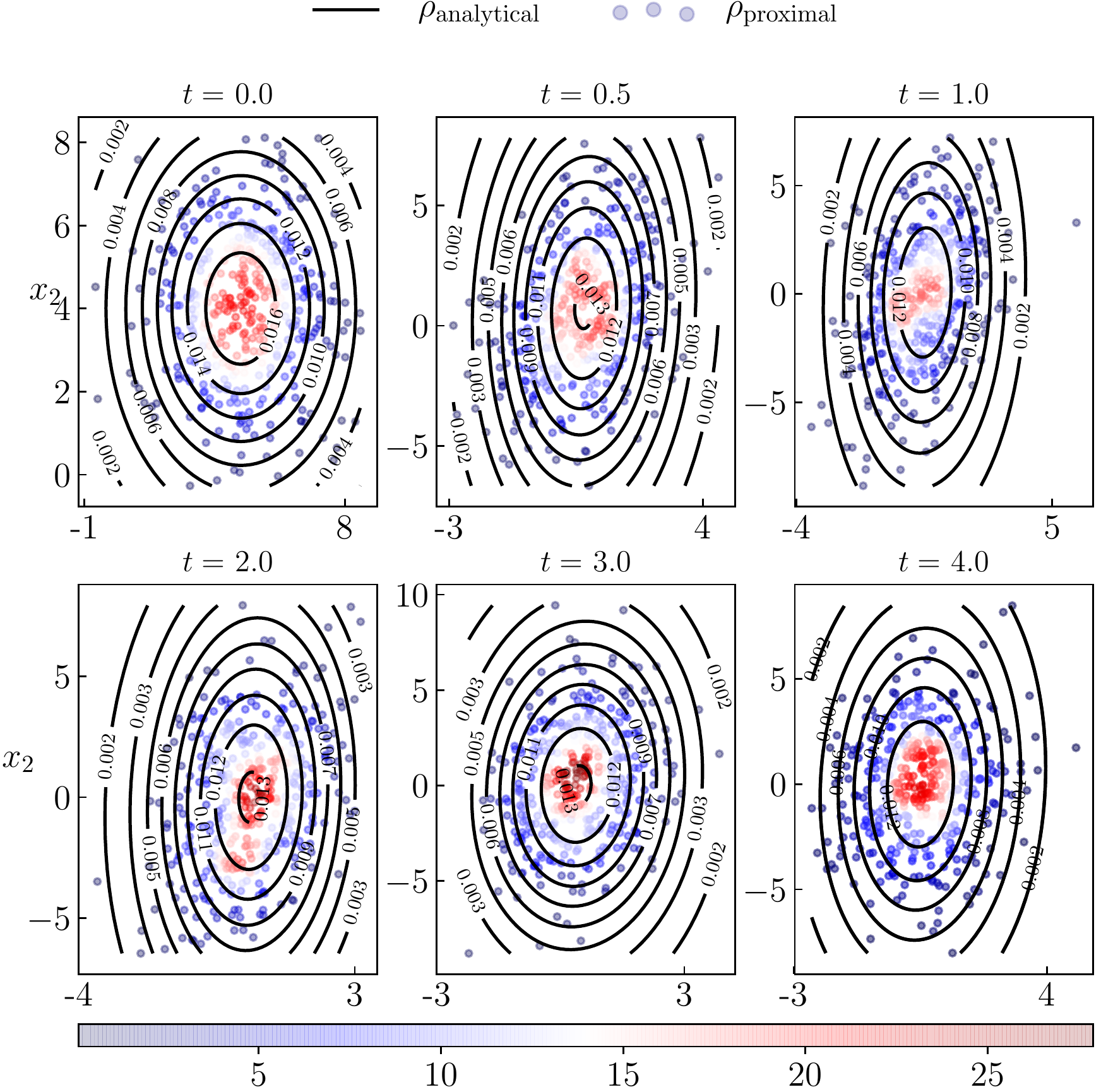}
\vspace*{-0.2in}
\caption{\small{Comparison of the analytical (\emph{contour plots}) and proximal (\emph{weighted scattered point cloud}) joint PDFs of the FPK PDE for (\ref{linearSDE}) with time step $h=10^{-3}$, and with parameters $\beta=1, \epsilon = 5\times 10^{-2}$. Simulation details are given in Section V.A.2. The color (\emph{red = high, blue = low}) denotes the joint PDF value obtained via proximal recursion at a point at that time (see colorbar).}}
\vspace*{-0.1in}
\label{Hurwitz}
\end{figure}

\begin{figure}[h]
\centering
\includegraphics[width=.85\linewidth]{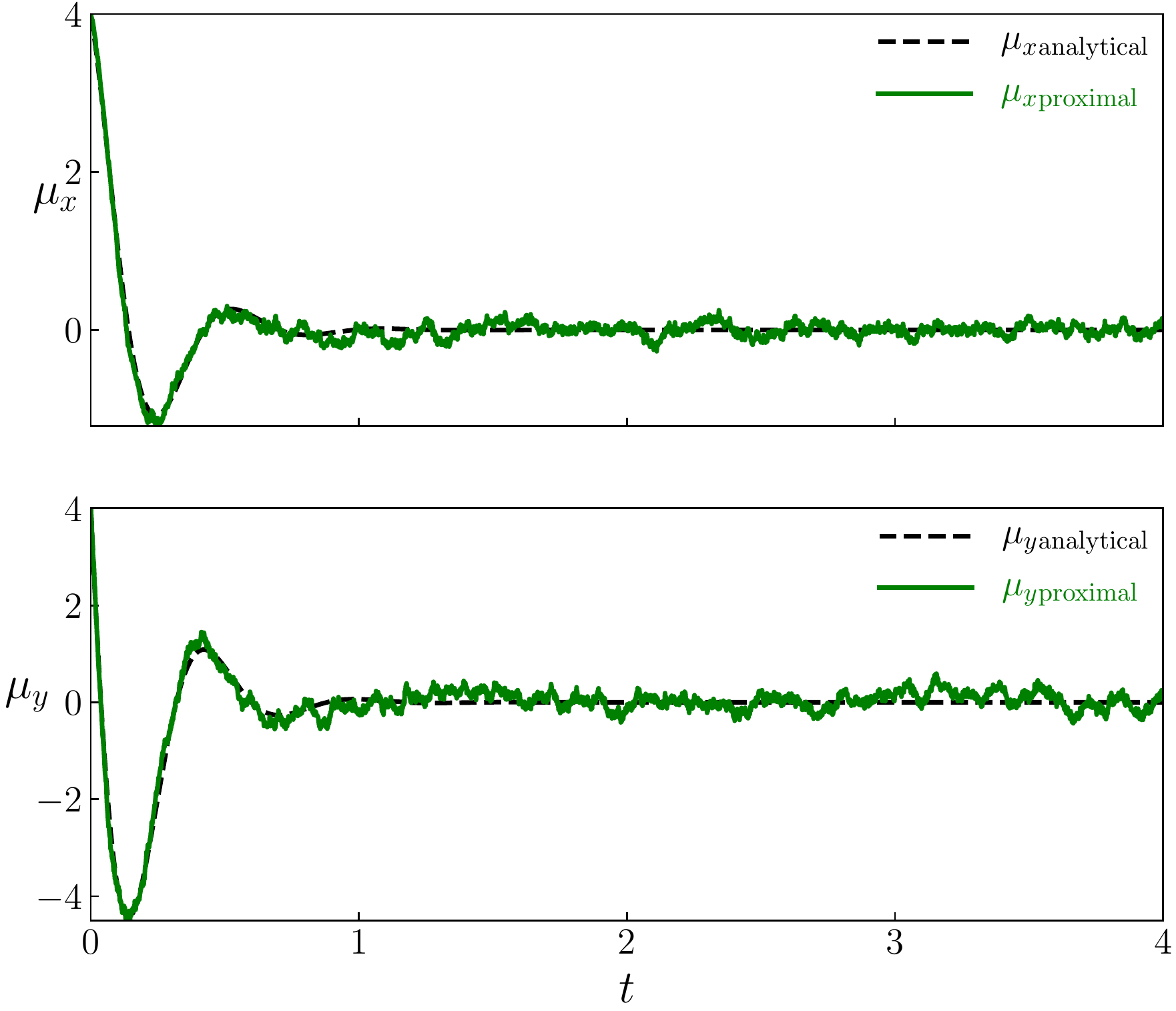}
\vspace*{-0.1in}
\caption{\small{Comparison of the components of the mean vectors from analytical (\emph{dashed}) and proximal (\emph{solid}) computation of the joint PDFs for (\ref{linearSDE}) with time step $h=10^{-3}$, and with parameters $\beta=1, \epsilon = 5\times 10^{-2}$. Simulation details are given in Section V.A.2.}}
\vspace*{-0.2in}
\label{Hurwitzmean}
\end{figure}

\begin{figure}[h] 
\centering
\includegraphics[width=.85\linewidth]{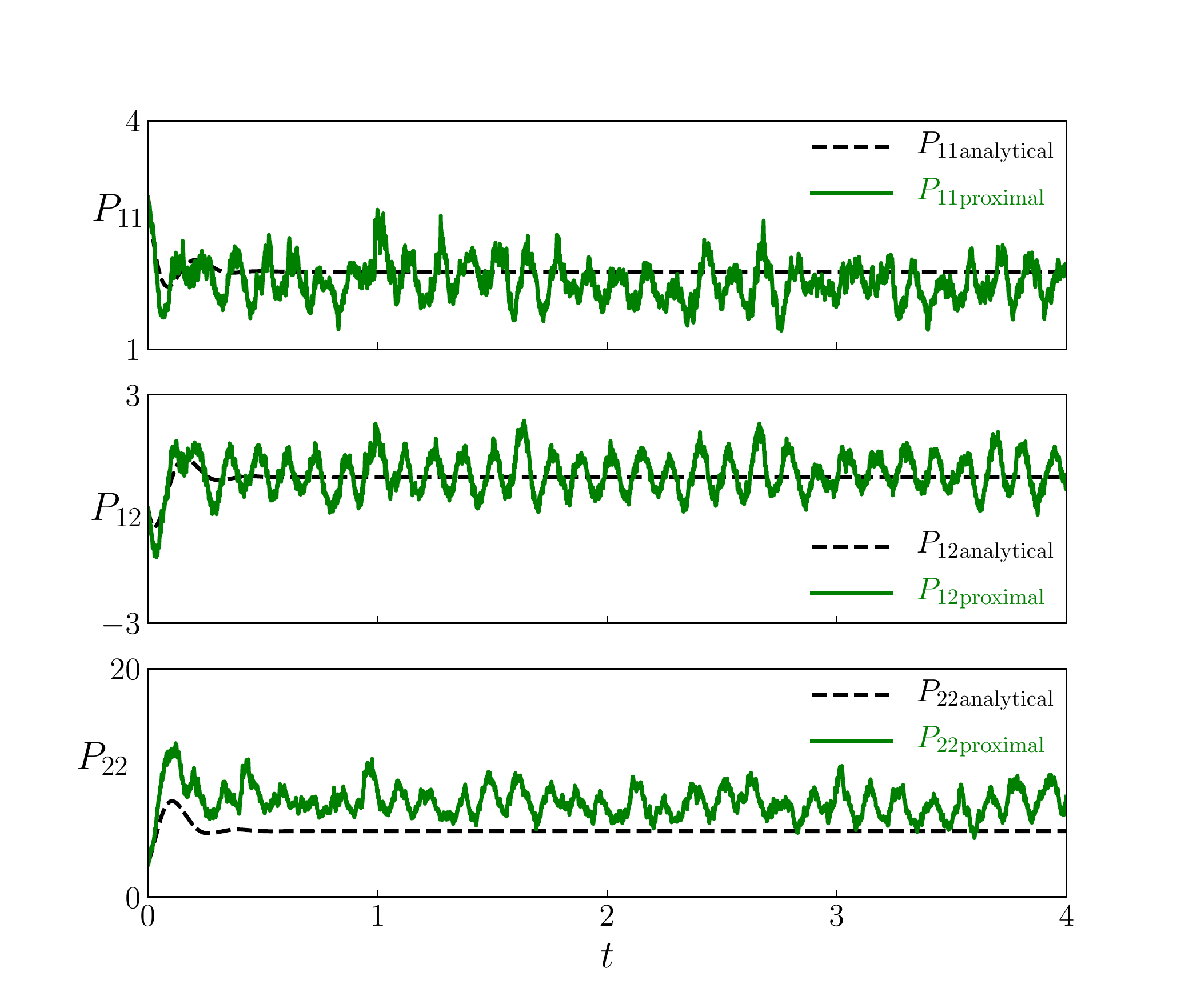}
\vspace*{-0.1in}
\caption{\small{Comparison of the components of the covariance matrices from analytical (\emph{dashed}) and proximal (\emph{solid}) computation of the joint PDFs for (\ref{linearSDE}) with time step $h=10^{-3}$, and with parameters $\beta=1, \epsilon = 5\times 10^{-2}$. Simulation details are given in Section V.A.2.}}
\vspace*{-0.1in}
\label{Hurwitzcovar}
\end{figure}

\begin{figure}[h]
\centering
\includegraphics[width=0.82\linewidth]{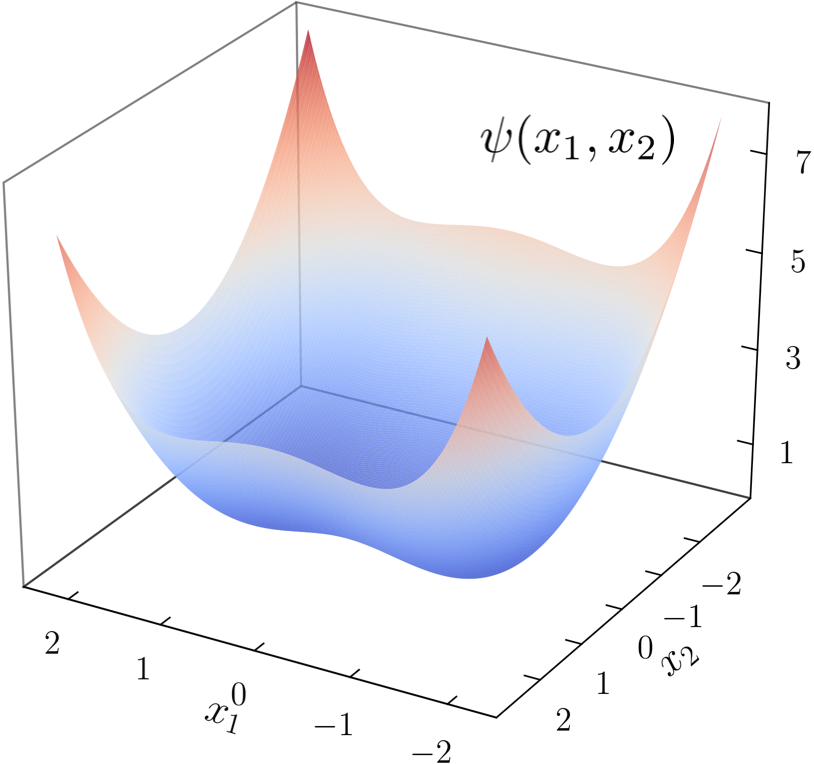}
\vspace*{-0.1in}
\caption{\small{The drift potential $\psi(x_{1},x_{2}) = \dfrac{1}{4}(1+x_1^4) + \dfrac{1}{2}(x_2^2-x_1^2)$ used in the example given in Section V.B.}}
\vspace*{-0.25in}
\label{2dpotential}
\end{figure}

\begin{figure}[h]
\centering
\includegraphics[width=0.95\linewidth]{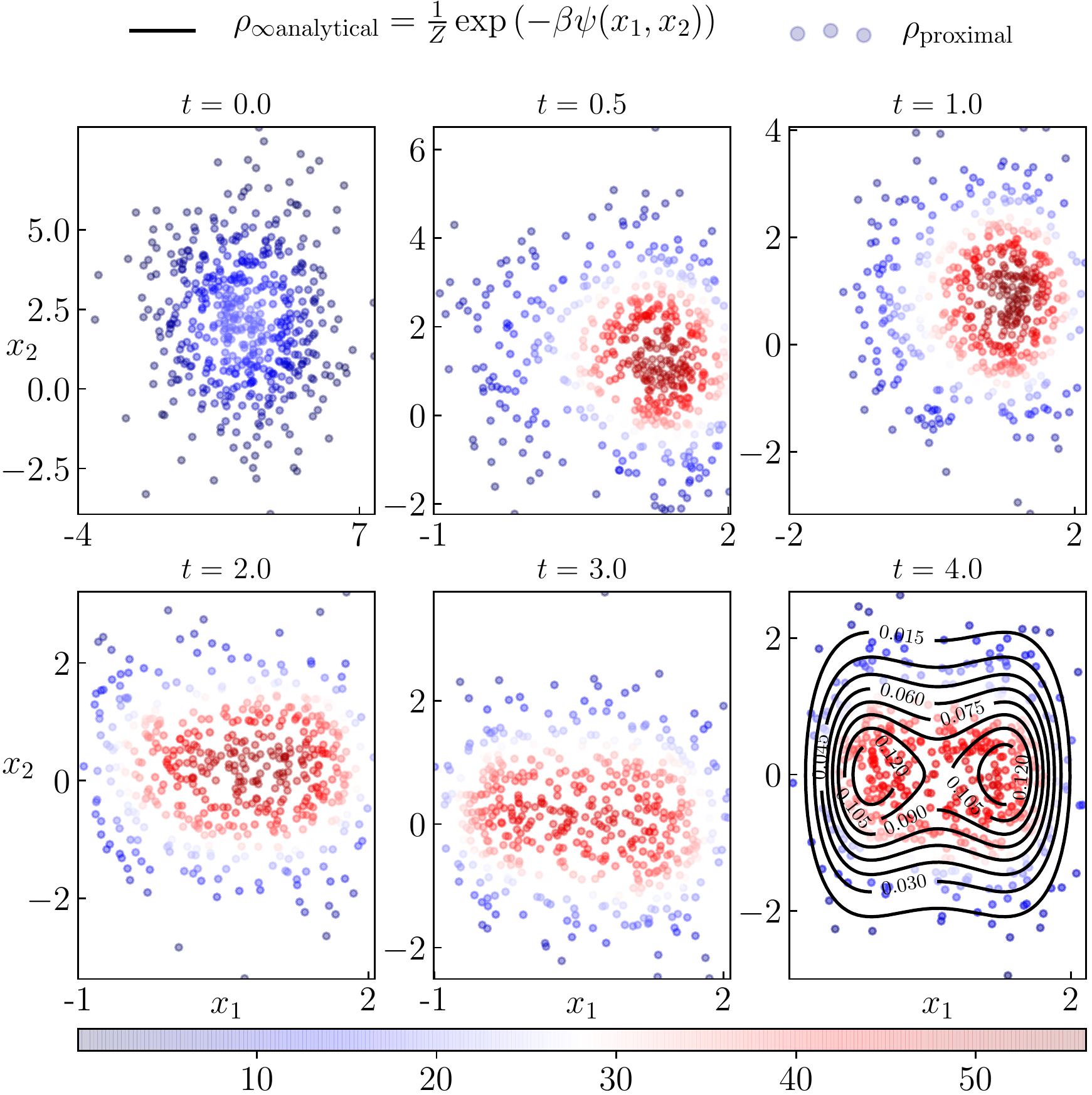}
\caption{\small{The proximal (\emph{weighted scattered point cloud}) joint PDFs of the FPK PDE (\ref{FPKgradient}) with the drift potential shown in Fig. \ref{2dpotential}, time step $h=10^{-3}$, and with parameters $\beta=1, \epsilon = 5\times 10^{-2}$. Simulation details are given in Section V.B. The color (\emph{red = high, blue = low}) denotes the joint PDF value obtained via proximal recursion at a point at that time (see colorbar).}}
\vspace*{-0.1in}
\label{2dgrad}
\end{figure}


\subsection{Nonlinear non-Gaussian System}
Next we consider the 2D nonlinear system of the form (\ref{ItoGradient}) with $\psi(x_{1},x_{2}) = \dfrac{1}{4}(1+x_1^4) + \dfrac{1}{2}(x_2^2-x_1^2)$ (see Fig. \ref{2dpotential}). As mentioned in Section III, the stationary PDF is $\rho_{\infty}(\bx) = \kappa \exp\left(-\beta\psi(\bx)\right)$, which for our choice of $\psi$, is bimodal. The transient PDFs have no known analytical solution but can be computed using the proposed proximal recursion. For doing so, we generate $N=400$ samples from the initial PDF $\rho_{0} = \mathcal{N}(\bm{\mu}_{0},\bm{\Sigma}_{0})$ with $\bm{\mu}_{0}=(2,2)^{\top}$ and $\bm{\Sigma}_{0}=4\bm{I}_{2}$, and set $\delta = 10^{-3}, L=100, h=10^{-3}, \beta=1, \epsilon = 5\times 10^{-2}$, as before. The resulting weighted point clouds are shown in Fig. \ref{2dgrad}; it can be seen that as time progresses, the joint PDFs computed via the proximal recursion, tend to the known stationary solution $\rho_{\infty}$ (contour plots in the right bottom sub-figure in Fig. \ref{2dgrad}).

\begin{figure}[h]
\centering
\includegraphics[width=0.9\linewidth]{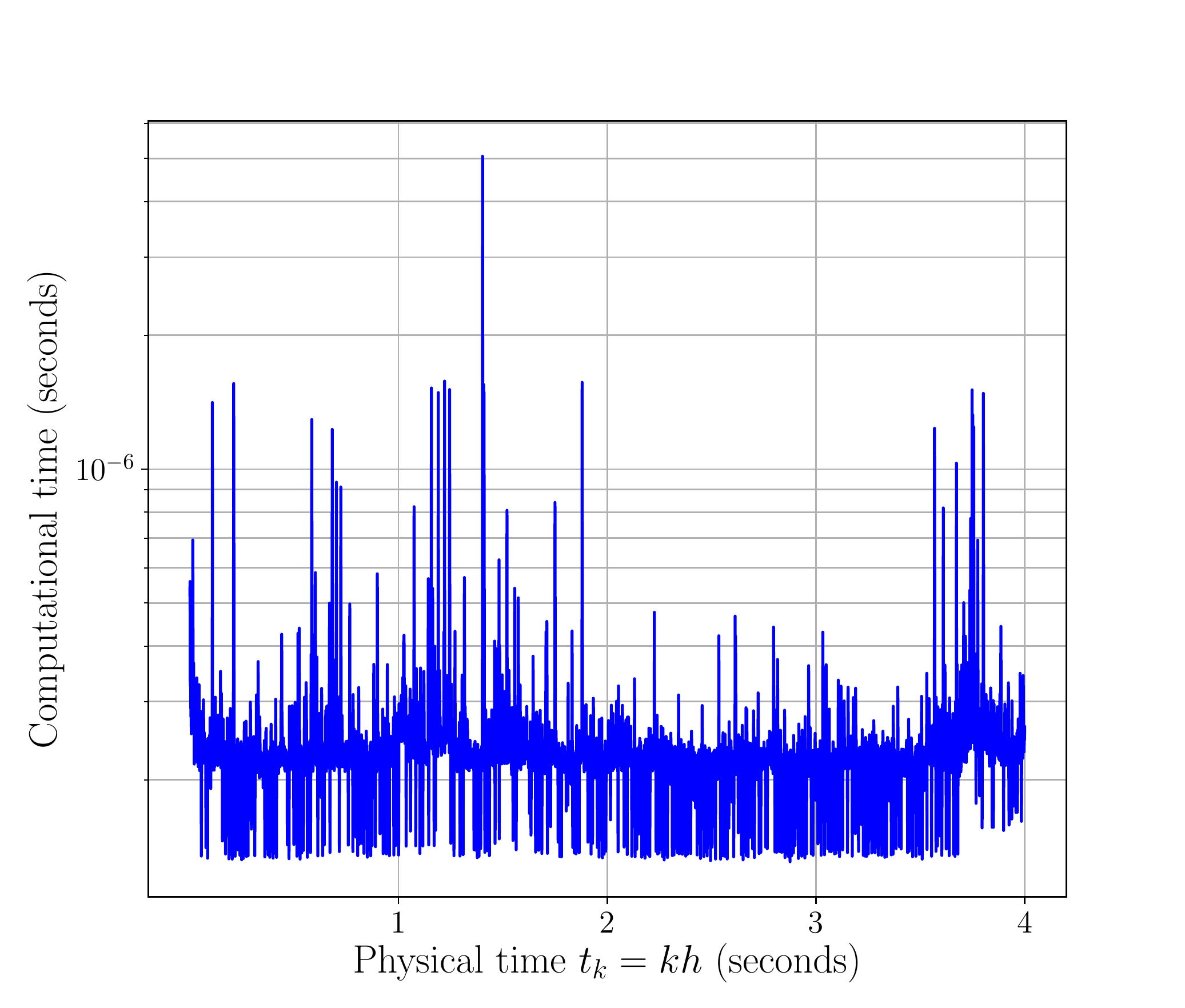}
\caption{\small{The computational times for proximal updates. Simulation details are given in Section V.B. Here, the physical time-step $h= 10^{-3}$ s, and $k=1,2,\hdots$.}}
\vspace*{-0.25in}
\label{RateOfConvProx}
\end{figure}

Fig. \ref{RateOfConvProx} shows the computational times for the proposed proximal recursions applied to the above nonlinear non-Gaussian system. Since the proposed algorithm involves sub-iterations (see while loop in Algorithm \ref{ProxRecur}) while keeping the physical time ``frozen", the convergence reported in Section IV.C must be achieved at ``sub-physical time step" level, i.e., must incur smaller than $h$ (here, $h= 10^{-3}$ s) computational time. Indeed, Fig. \ref{RateOfConvProx} shows that each proximal update takes approx. $10^{-6}$ s, or $10^{-3} h$ computational time, which demonstrates the efficacy of the proposed framework. 

\section{Conclusions}
We proposed a variational recursion to numerically solve the transient Fokker-Planck or Kolmogorov's forward equation by exploiting the underlying infinite-dimensional gradient flow structure in the manifold of PDFs. From a computational standpoint, this work develops a novel point cloud solver for performing the Otto calculus avoiding spatial discretization or function approximation. From systems-theoretic standpoint, this work contributes to an emerging research program \cite{halder2017gradient,halder2018gradient} in uncovering new geometric meanings of the equations of uncertainty propagation and filtering, and using the same to efficiently solve these equations via proximal algorithms \cite{parikh2014proximal}.





%
\bibliographystyle{IEEEtran}
\bibliography{references.bib}


\end{document}